%

\documentclass[aap,MSNbibl,seceqn,citesort,dvips]{arximspdf}
\usepackage{mathbh}
\usepackage{graphicx}

%

\doi{10.1214/11-AAP832} 
\volume{23}
\issue{2}
\pubyear{2013}
\firstpage{492}
\lastpage{528}

\makeatletter

\newtheorem{theorem}{Theorem}[section]
\newtheorem{lemma}[theorem]{Lemma}
\newtheorem{claim}[theorem]{Claim}
\newtheorem{proposition}[theorem]{Proposition}

\newproclaim{definition}[theorem]{Definition}
\newproclaim{remark}{Remark}

\newcommand{\one}{\mathbh{1}}
\newcommand{\R}{\mathbb R}

\newcommand{\E}{\mathbb{E}}
\renewcommand{\P}{\mathbb{P}}

\renewcommand{\epsilon}{\varepsilon}

\newcommand{\cF}{\mathcal{F}}
\newcommand{\cC}{\mathcal{C}}
\newcommand{\cL}{\mathcal{L}}
\newcommand{\bin}{\operatorname{Bin}}
\newcommand{\tauc}{{\tau_c}}

\setattribute{abstract}    {skip} {20\p@}
\setattribute{keyword}     {skip} {8\p@}
\setattribute{frontmatter} {skip} {0\p@ plus 3\p@ minus 3\p@}

\makeatother

\begin{document}
\begin{frontmatter}

\title{Stochastic coalescence in logarithmic time}
\runtitle{Stochastic coalescence in logarithmic time}

\begin{aug}
\author[A]{\fnms{Po-Shen} \snm{Loh}\corref{}\ead[label=e1]{ploh@cmu.edu}}
\and
\author[B]{\fnms{Eyal} \snm{Lubetzky}\ead[label=e2]{eyal@microsoft.com}}
\runauthor{P.-S. Loh and E. Lubetzky}
\affiliation{Carnegie Mellon University and Microsoft Research}
\address[A]{Department of Mathematical Sciences\\
Carnegie Mellon University\\
Pittsburgh, Pennsylvania 15213\\
USA\\
\printead{e1}} 
\address[B]{Theory Group of Microsoft Research\\
One Microsoft Way\\
Redmond, Washington 98052\\
USA\\
\printead{e2}}
\end{aug}

\received{\smonth{3} \syear{2011}}
\revised{\smonth{11} \syear{2011}}

%
\begin{abstract}
The following distributed coalescence protocol was introduced by
Dahlia Malkhi in 2006 motivated by applications in social
networking. Initially there are $n$ agents wishing to coalesce into
one cluster via a decentralized stochastic process, where each round
is as follows: every cluster flips a fair coin to dictate whether it
is to issue or accept requests in this round. Issuing a request
amounts to contacting a cluster randomly chosen \textit{proportionally
to its size}. A cluster accepting requests is to select an
incoming one \textit{uniformly} (if there are such) and merge with
that cluster.
%
Empirical results by Fernandess and Malkhi suggested the protocol
concludes in $O(\log n)$ rounds with high probability, whereas
numerical estimates by Oded Schramm, based on an ingenious analytic
approximation, suggested that the coalescence time should be super-logarithmic.

Our contribution
is a rigorous study of the stochastic coalescence process with two
consequences. First, we confirm that the above process indeed
requires super-logarithmic time w.h.p., where the inefficient rounds
are due to oversized clusters that occasionally develop. Second, we
remedy this by showing that a simple modification produces an
essentially optimal distributed protocol; if clusters
favor their \textit{smallest} incoming merge request then the process
\textit{does} terminate in $O(\log n)$ rounds w.h.p., and simulations
show that the new protocol readily outperforms the original one.
Our upper bound hinges on a potential function involving the
logarithm of the number of clusters and the cluster-susceptibility,
carefully chosen to form a supermartingale. The analysis of the
lower bound builds upon the novel approach of Schramm which may find
additional applications: rather than seeking a single parameter that
controls the system behavior, instead one approximates the system by
the Laplace transform of the entire cluster-size distribution.
\end{abstract}

%
\begin{keyword}[class=AMS]
\kwd{60K30}
\kwd{60K35}
\kwd{60J10}
\end{keyword}
\begin{keyword}
\kwd{Stochastic coalescence processes}
\kwd{randomized distributed algorithms}
\end{keyword}

\end{frontmatter}

\section{Introduction}\label{intro}

The following stochastic distributed coalescence protocol was proposed
by Malkhi in 2006, motivated by applications in social networking and
the reliable formation of peer-to-peer networks (see~\cite{FM1}
for
more on these applications). The objective is to coalesce $n$
participating agents into a single hierarchal cluster reliably and
efficiently. To do so without relying on a centralized authority, the
protocol first identifies each agent as a cluster (a singleton), and
then proceeds in rounds as follows:
\begin{longlist}[(3)]
\item[(1)] Each cluster flips a fair coin to determine whether it will
be \textit{issuing} a merge-request or \textit{accepting} requests in the
upcoming round.
\item[(2)] Issuing a request amounts to selecting another cluster
randomly \textit{proportionally to its size}.
\item[(3)] Accepting requests amounts to choosing an incoming request
(if there are any) \textit{uniformly} at random and proceeding to merge
with that cluster.
\end{longlist}
In practice, each cluster is in fact a layered tree whose root is
entrusted with running the protocol, for example, each root decides
whether to issue or accept requests in a given round, etc. When
attempting to merge with another cluster, the root of cluster $\cC_i$
simply chooses a vertex $v$ uniformly out of $[n]$, which then
propagates the request to its root. This therefore corresponds to
choosing the cluster $\cC_j$ proportionally to $|\cC_j|$. This part of
the protocol is well-justified by the fact that agents within a cluster
typically have no information on the structure of other clusters in the system.

A second feature of the protocol is the symmetry between the roles of
issuing or accepting requests played by the clusters. Clearly, every
protocol enjoying this feature would have (roughly) at most half of its
clusters become acceptors in any given round, and as such could terminate
within $O(\log n)$ rounds. Furthermore, on an intuitive level, as long
as all clusters are of roughly the same size (as is the case
initially), there are few ``collisions'' (multiple clusters issuing a
request to the same cluster) each round and hence, the effect of a
round is similar to that of merging clusters according to a random
perfect matching. As such, one might expect that the protocol should
conclude with a roughly balanced binary tree in logarithmic time.

Indeed, empirical evidence by Fernandess and Malkhi~\cite{FM} showed
that this protocol seems highly efficient, typically taking
a logarithmic number of rounds to coalesce. However, rigorous
performance guarantees for the protocol were not available.

While there are numerous examples of stochastic processes that have
been successfully analyzed by means of identifying a single tractable
parameter that controls their behavior, here it appears that the entire
distribution of the cluster-sizes plays an essential role in the
behavior of the system. Demonstrating this is the following example:
suppose that the cluster $\cC_1$ has size $n-o(\sqrt{n})$ while all
others are singletons. In this case it is easy to see that with high
probability all of the merge-requests will be issued to $\cC_1$, who
will accept at most one of them\vadjust{\goodbreak} (we say an event holds \textit{with high
probability}, or w.h.p. for brevity, if its probability tends to $1$
as $n\to\infty$). Therefore, starting from this configuration,
coalescence will take at least $n^{1/2-o(1)}$ rounds w.h.p., a
polynomial slowdown.
Of course, this scenario is extremely unlikely to arise when starting
from $n$ individual agents, yet possibly other mildly unbalanced
configurations \textit{are} likely to occur and slow the process down.

In 2007, Schramm proposed a novel approach to the problem,
approximately reducing it to an analytic problem of determining the
asymptotics of a recursively defined family of real functions. Via this
approximation framework Schramm then gave numerical estimates
suggesting that the running time of the stochastic coalescence protocol
is w.h.p. super-logarithmic. Unfortunately, the analytical problem
itself seemed highly nontrivial and overall no bounds for the process
were known.

\subsection{New results}

In this work we study the stochastic coalescence process with two main
consequences. First, we provide a rigorous lower bound confirming that
this process w.h.p. requires a super-logarithmic number of rounds to
terminate. Second, we identify the vulnerability in the protocol,
namely the choice of which merge-request a cluster should approve.
While the original choice seems promising in order to maintain the
balance between clusters, it turns out that typical deviations in
cluster-sizes are likely to be amplified by this rule and lead to
irreparably unbalanced configurations. On the other hand, we show that
a simple modification of this rule to favor the smallest incoming
request is already enough to guarantee coalescence in $O(\log n)$
rounds w.h.p.
[Here and in what follows we let $f \lesssim g$ denote that $f = O(g)$
while $f \asymp g$ is short for $f \lesssim g \lesssim f$.]
%
%
\begin{theorem}\label{thm-1}
The uniform coalescence process $\mathcal{U}$ coalesces in $\tauc
(\mathcal{U}) \gtrsim\log n \cdot
\frac{\log\log n}{\log\log\log n}$ rounds w.h.p. Consider a
modified \textup{size-biased} process $\mathcal{S}$ where every accepting
cluster $\cC_i$ has the following rule:
\begin{itemize}
\item Ignore requests from clusters of size larger than $|\cC_i|$.
\item Among other requests (if any), select one issued by a cluster
$\cC_j$ of smallest size.
\end{itemize}
Then the coalescence time of the size-biased process satisfies $\tauc
(\mathcal{S}) \asymp\log n$ w.h.p.
\end{theorem}

Observe that the new protocol is easy to implement efficiently in
practice as each root can keep track of the size of its cluster and can thus
include it as part of the merge-request.

\subsection{Empirical results}
Our simulations show that the running time of the size-biased process
is approximately $5 \log_2 n$. Moreover, they further demonstrate that
the new size-biased process
empirically\vadjust{\goodbreak} performs substantially better than the uniform process
even for fairly small values of $n$, that is, the improvement appears
not only
asymptotically in the limit but already for ordinary input sizes.
These results are summarized in Figure~\ref{figruntimes}, where
the plot on the left clearly shows how the uniform process diverges from
%
%
\begin{figure}

\includegraphics{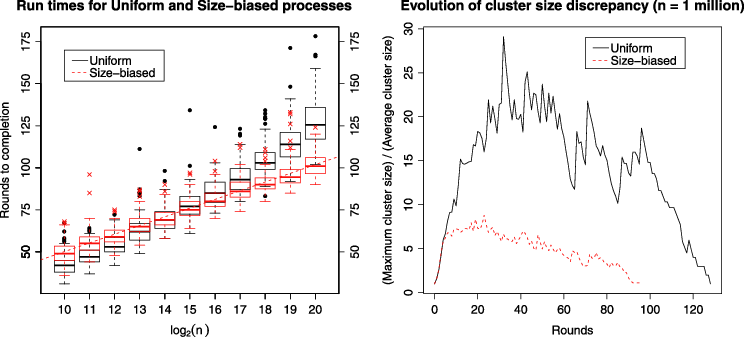}

\caption{The left plot compares the running times for
the two processes. Statistics are derived from 100 independent
runs of each process, for each $n \in\{1024, 2048, \ldots,
2^{20}\}$. The right plot tracks the ratio between the maximum
and average cluster-sizes, through a single run of each process,
for $n = 10^{6}$. There, the uniform process took 128 rounds,
while the size-biased process finished in 96.}
\label{figruntimes}
\end{figure}
the linear (in logarithmic scale) trend corresponding to the runtime of
the size-biased process. The right-most plot identifies the crux of the
matter; the uniform process
rapidly produces a highly skewed cluster-size distribution, which slows
it down considerably.

\subsection{Related work}

There is extensive literature on stochastic coalescence processes whose
various flavors fit the following scheme: the clusters act via a
continuous-time process where the coalescence rate of two clusters with
given masses $x,y$ (which can be either discrete or continuous) is
dictated up to re-scaling by a rate kernel $K$. A notable example of
this is Kingman's coalescent~\cite{Kingman}, which corresponds to the
kernel $K(x,y)=1$ and has been intensively studied in mathematical
population genetics (see, e.g.,~\cite{DurrettDNA} for more on Kingman's
coalescent and its applications in genetics). Other rate kernels that
have been thoroughly studied include the additive coalescent $K(x,y) =
x+y$ which corresponds to Aldous's continuum random
tree~\cite{Aldous91}, and the multiplicative coalescent $K(x,y)=xy$
that corresponds to Erd\H{o}s--R\'enyi random graphs~\cite{ER} (see the
books~\cite{BolBook,JLR}). For further information on these as well as
other coalescence processes, whose applications range from physics to
chemistry to biology, we refer the reader to the excellent survey of
Aldous~\cite{Aldous97}.

A major difference between the classical stochastic coalescence
processes mentioned above and those studied in this work is the
synchronous nature of the latter ones. Instead of individual merges
whose occurrences are governed by independent exponentials, here the
process is comprised of rounds where all clusters act simultaneously
and the outcome of a round (multiple disjoint merges) is a function of
these combined actions. This framework introduces delicate dependencies
between the clusters, and rather than having the coalescence rate of
two clusters be given by the rate kernel $K$ as a function of their
masses, here it is a function of the entire cluster distribution. For
instance, suppose nearly all of the mass is in one cluster $\cC_i$
(which thus attracts almost all merge requests); its coalescence rate
with a given cluster $\cC_j$ in the uniform coalescence process
$\mathcal{U}$ clearly depends on the total number of clusters at that
given moment, and similarly in the size-biased coalescence process
$\mathcal{S}$ it depends on the sizes of all other clusters, viewed as
competing with $\cC_j$ over this merge.
In face of these mentioned dependencies, the task of analyzing the
evolution of the clusters along the high-dimensional stochastic
processes $\mathcal{U}$ and $\mathcal{S}$ becomes highly nontrivial.


In terms of applications and related work in \textit{computer science}, the
processes studied here have similar flavor to those which arose in the 1980s,
most notably the \textit{random mate} algorithm introduced by Reif, and
used by
Gazit~\cite{Gazit} for parallel graph components and by Miller and
Reif~\cite{MillerReif} for parallel tree contraction.
However, as opposed to the setting of those algorithms, a key
difference here
is the fact that as the process evolves through time, each cluster is
oblivious to
the distribution of its peers at any given round (including the total
number of clusters for that matter).
Therefore, for instance, it is impossible for a cluster to sample from
the uniform distribution over the other clusters when issuing its merge request.

For another related line of works in \textit{computer science},
recall that the coalescence processes studied in this work organize $n$
agents in
a hierarchic tree, where each merged cluster reports to its acceptor
cluster. This is closely related to the rich and intensively studied
topic of randomized leader elections (see,
e.g., \mbox{\cite{CL,FMS,ORV,RZ,Zuckerman}}), where a computer network
comprised of $n$ processors attempts to single out a leader (in charge
of communication, etc.) by means of a distributed randomized process
generating the hierarchic tree. Finally, studying the dynamics of
randomly merging sets is also fundamental to
understanding the average-case performance of disjoint-set data
structures (see, e.g., the works of Bollob\'as and Simon~\cite{BS},
Knuth and Sch\"onhage~\cite{KS} and Yao~\cite{Yao}).
These structures, which are of fundamental importance in computer
science, store
collections of disjoint sets and support two operations; (i) taking the
union of a pair of sets and (ii) determining which set a particular
element is in (see, e.g.,~\cite{Galil} for a survey of these data
structures). The processes studied here precisely consider the
evolution of a
collection of disjoint sets under random merge operations and it is plausible
that the tools used here could contribute to advances in that area.

\subsection{Main techniques}

As we mentioned above, the main obstacle in the coalescence processes
studied here
is that since requests go to other clusters with probability
proportional to their size, the
largest clusters can create a bottleneck, absorbing all requests yet
each granting only one per round. An intuitive approach for analyzing
the size-biased process $\mathcal{S}$
would be to track a statistic that would warn against this scenario,
with the most obvious candidate
being the size of the largest cluster. However, simulations indicate that
this alone will be insufficient as the largest cluster does in fact grow
out of proportion in typical runs of the process. Nevertheless, the
distribution of large clusters
turns out to be sparse. The key idea is then to track a smoother
parameter involving the \textit{susceptibility}, which is essentially the
second moment of the cluster-size distribution.

To simplify notation, normalize the cluster-sizes $w_i$ to sum to 1 so
that the initial distribution consists of
$n$ clusters of size $\frac{1}{n}$ each. With this
normalization, the susceptibility $\chi_t$ is defined as $\sum_i
w_i^2$, the sum of
squares of cluster-sizes after the $t$th round. (We note in passing
that this
parameter has played a central role in the study of the
phase-transition in
percolation and random graphs; see, e.g.,~\cite{Grimmett,SW}.)
The proof that the size-biased protocol is optimal hinges on a
carefully chosen potential function $\Phi_t = \chi_t \kappa_t + C
\log
\kappa_t$, where $\kappa_t$ denotes the number of clusters after the
$t$th round and $C$ is an absolute constant chosen to turn $\Phi_t$
into a supermartingale. In Sections~\ref{secsize-E} and \ref
{secsize-whp} we will control the evolution of $\Phi_t$ and
prove our upper bound on the running time of the size-biased process.

The analysis of the uniform process $\mathcal{U}$ is delicate and
relies on rigorizing and analyzing the novel framework of Schramm \cite
{Schramm,Schramm2} for approximating the problem
by an analytic one. We believe this technique is of independent
interest and may find additional applications in the analysis of
high-dimensional stochastic processes. Instead of seeking a
single parameter to summarize the system behavior, one instead
measures the system using the Laplace transform of the entire
cluster-size distribution.
%
%
\begin{definition}\label{def-F-G}
For any integer $t\geq0$ let $\cF_t$ be the $\sigma$-algebra
generated by the first $t$ rounds of the process.
Conditioned on $\cF_t$, define the functions $F_t(s)$ and $G_t(s)$ on
the domain $\R$ as follows. Let $\kappa$ be the number of clusters
and let $w_1,\ldots, w_\kappa$ be the normalized cluster-sizes after
$t$ rounds. Set
%
%
\begin{equation}\label{defFG}
F_t(s) = \sum_{i=1}^{\kappa} \exp(-w_i s),\qquad
G_t(s) = \frac{1}{\kappa} F_t(\kappa s).
\end{equation}
\end{definition}

As we will further explain in Section~\ref{secoded}, the Laplace
transform $F_t$ simultaneously captures all the moments of the
cluster-size distribution, in a manner analogous to the moment
generating function of a random variable. This form is particularly
useful in our application as we will see in Section~\ref{secuniform}
that the specific evaluation $G_t ( \frac{1}{2} )$\vadjust{\goodbreak} governs
the expected coalescence rate. Furthermore, it turns out that it is
possible to estimate values of $F_t$ (and $G_t$) recursively. Although
the resulting recursion is nonstandard and highly complex, a somewhat
intricate analysis eventually produces a lower bound for the uniform
process.

\subsection{Organization}
The rest of this paper is organized as follows. In Section~\ref
{secoded} we describe Schramm's analytic approach for approximating
the uniform process~$\mathcal{U}$. Sections~\ref{secsize-E} and \ref
{secsize-whp} are devoted to the size-biased process $\mathcal{S}$. In
the former we prove that $\E[\tauc(\mathcal{S})]=O(\log n)$ and in the
latter we build on this proof together with additional ideas to show
that $\tauc(\mathcal{S})=O(\log n)$ w.h.p. The final section,
Section~\ref{secuniform}, builds upon Schramm's aforementioned
framework to produce a super-logarithmic lower bound for $\tauc
(\mathcal{U})$.

\section{Schramm's analytic approximation framework for the uniform
process}\label{secoded}

In this section we describe Schramm's analytic approach as it was
presented in~\cite{Schramm,Schramm2} for analyzing the uniform coalescence
process $\mathcal{U}$, as well as the numerical evidence that Schramm
obtained based on this approach suggesting that $\tauc(\mathcal{U})$ is
super-logarithmic. Throughout this section we write approximations
loosely as they were sketched by Schramm and postpone any arguments on
their validity (including concentration of random variables, etc.) to
Section~\ref{secuniform}, where we will turn elements from this
approach into a rigorous lower bound on $\tauc(\mathcal{U})$.

Let $\cF_t$ denote the $\sigma$-algebra generated by the first $t$
rounds of the coalescence process $\mathcal{U}$. The starting point of
Schramm's approach was to examine the following function conditioned on
$\cF_t$:
\[
F_t(s) = \sum_{i=1}^{\kappa_t} \exp(-w_i s),
\]
where $\kappa_t$ is the number of clusters after $t$ rounds and
$w_1,\ldots,w_{\kappa_t}$ denote the normalized cluster-sizes at that
time (see Definition~\ref{def-F-G}).
The benefit that one could gain from understanding the behavior of
$F_t(s)$ is obvious as $F_t(0)$ recovers the number of clusters at time $t$.

More interesting is the following observation of Schramm regarding the
role that $F_t(\kappa_t/2)$ plays in the evolution of the clusters.
Conditioned on $\cF_t$, the probability that the cluster $\cC_i$
receives a merge request from another cluster $\cC_j$ is $\frac12 w_i$
(the factor $\frac12$ accounts for the choice of $\cC_j$ to issue
rather than accept requests). Thus, the probability that $\cC_i$ will
receive \textit{any} incoming request in round $t+1$ and independently
decide to be an acceptor is
\[
\tfrac12 [1 - (1- w_i/2)^{\kappa_t-1}] \approx\tfrac12
[1-\exp( - w_i \kappa_t/2)].
\]
On this event, $\cC_i$ will account for one merge at time $t+1$, and
summing this over all clusters yields
\[
\E[\kappa_{t+1} \mid\cF_t ] \approx\kappa_t - \frac12 \sum
_{i=1}^{\kappa_t} [1-\exp( - w_i \kappa_t/2)] = \frac
12
[\kappa_t + F_t(\kappa_t/2)]\vadjust{\goodbreak}
\]
or equivalently, re-scaling $F_t(s)$ into $G_t(s) = (1/\kappa
_t)F_t(\kappa_t s)$ as in (\ref{defFG}),
%
%
\begin{equation}
\label{approxdrop}
\mathbb{E} [ \kappa_{t+1}/\kappa_t \mid\mathcal{F}_t ]
\approx
\frac{
1 + G_t(1/2)
}{2}.
\end{equation}
In order to have $\tauc(\mathcal{U}) \asymp\log n$ the number of clusters
would need to typically drop by at least a constant factor at each
round. This would
require the ratio in (\ref{approxdrop}) to be bounded away from
1, or equivalently, $G_t(\frac12)$ should be bounded away from 1.

Unfortunately, the evolution of the sequence $G_t(\frac12) = (1/\kappa
_t)F_t(\kappa_t/2)$ appears to be quite complex and there does not seem
to be a simple way to determine its limiting behavior. Nevertheless,
Schramm was able to write down an approximate recursion for the
expected value of $F_{t+1}$ in terms of multiple evaluations of $F_t$
by observing the following. On the above event that $\cC_i$ chooses to
accept the merge request of some other cluster $\cC_j$, by definition
of the process $\mathcal{U}$, the identity of the cluster $\cC_j$ is
uniformly distributed over all $\kappa_t-1$ clusters other than $\cC
_i$. Hence,
\begin{eqnarray*}
&&
\E[F_{t+1}(s)-F_t(s)\mid\cF_t] \\
&&\qquad\approx\sum_i \frac
12
(1-e^{-w_i \kappa_t/2} ) \frac1{\kappa_t}\sum_{j \neq i}
\bigl(e^{-(w_i + w_j)s} -e^{-w_i s} - e^{-w_j s}\bigr).
\end{eqnarray*}
Ignoring the fact that the last sum in the approximation skips the
diagonal terms $j=i$, one arrives at
a summation over all $1\leq i,j \leq\kappa_t$ of exponents similar to
those in the definition of $F_t$ with an argument of either $s$,
$\kappa
_t/2$, or $s + \kappa_t/2$, which, after rearranging, gives
%
\[
\E[ F_{t+1} (s) \mid\mathcal{F}_t ]
\approx\frac12 F_t(s+\kappa_t/2) + \frac{1}{2\kappa_t} F_t(s) [
F_t (s) + F_t(\kappa_t/2) - F_t(s+\kappa_t/2) ].
\]
To turn\vspace*{1pt} the above into an expression for $G_{t+1}(s)$ one needs to
evaluate $F_{t+1}(\kappa_{t+1} s)$ rather than $F_{t+1}(\kappa_t s)$,
to which\vspace*{2pt} end the approximation $\kappa_{t+1} \approx\frac12
[1+G_t(\frac12)] \kappa_t $ can be used based on (\ref{approxdrop}).
Additionally, for the starting point of the recursion, note that the
initial configuration of $w_i = 1/\kappa_0$ for all $1\leq i\leq
\kappa
_0$ has $G_0(s) = \exp(-s)$. Altogether, Schramm obtained the following
deterministic analytic recurrence,
whose behavior should (approximately) dictate the coalescence rate:
\[
\cases{
g_0 (s) = \exp(-s), \vspace*{2pt}\cr
\displaystyle g_{t+1}(s) =
{\frac{1}{2\alpha}\biggl[ g_t ( \alpha s )^2 - g_t \biggl( \alpha s +
\frac{1}{2} \biggr) g_t (\alpha s ) + g_t\biggl( \alpha s + \frac{1}{2}
\biggr) + g_t \biggl( \frac{1}{2} \biggr) g_t (\alpha s ) \biggr]},
\vspace*{2pt}\cr
\qquad\mbox{where $\displaystyle \alpha=
\frac12\biggl[1+g_t\biggl(\frac12\biggr)\biggr]$}.}
%
\]
In light of this, aside from the task of assessing how good of an
approximation the above defined functions $g_t$
provide for the random variables $G_t$ along the uniform coalescence
process $\mathcal{U}$,
the other key question is whether the sequence $g_t(\frac12)$ converges
to $1$ as $t\to\infty$, and if so, at what rate.

For the latter, as the complicated definition of $g_{t+1}$ attests,
analyzing the recursion of $g_t$ seems highly nontrivial. Moreover, a
naive evaluation of $g_t(\frac12)$ involves exponentially many terms,
making numerical simulations already challenging.
The computer-assisted numerical estimates performed by Schramm for the
above recursion, shown in Figure~\ref{figoded-numerics}, seemed to
suggest that indeed $g_t(\frac12)\to1$ (albeit very slowly), which
should lead to a super-logarithmic coalescence time for~$\mathcal{U}$.
However, no rigorous results were known for the limit of $g_t(\frac12)$
or its stochastic counterpart $G_t(\frac12)$.

%
%
\begin{figure}

\includegraphics{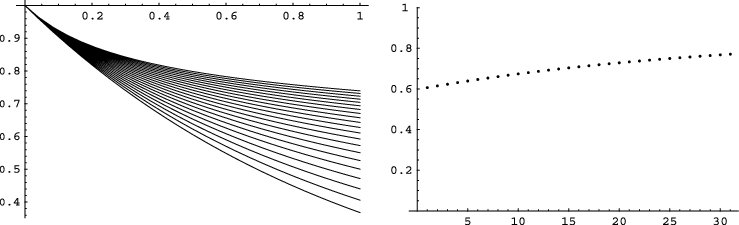}

\caption{Numerical estimations by Oded Schramm for the functions
$G_t(s)$ from his analytic approximation of the uniform coalescence
process. The left plot features $G_t(s)$ for $t=\{0,2,\ldots,40\}$ and
$s\in[0,1]$ and demonstrates how these increase with $t$. The right
plot focuses on $G_t(\frac12)$ and suggests that $G_t(\frac12)\to1$
and that in turn the coalescence rate should be super-logarithmic.}
\label{figoded-numerics}
\end{figure}

As we show in Section~\ref{secuniform}, in order to turn Schramm's
argument into a rigorous lower bound on $\tauc(\mathcal{U})$, we move
our attention away from the sought value of $G_t(\frac12)$ and focus
instead on $G_t(1)$. By manipulating Schramm's recursion for $G_t$ and
combining it with additional analytic arguments and appropriate
concentration inequalities, we show that as long as $\kappa_t$ is large
enough and $G_t(\frac12) < 1 - \delta$ for some fixed $\delta>0$,
then typically
$G_{t+1}(1) > G_t(1) + \epsilon$ for some $\epsilon(\delta) > 0$. Since
by definition $0 \leq G_t(1) \leq1$, this can be used to show that
ultimately $G_t(\frac12)\to1$ w.h.p., and a careful quantitative
version of this argument produces the rigorous lower bound on $\tauc
(\mathcal{U})$ stated in Theorem~\ref{thm-1}.

\section{Expected running time of the size-biased process}
\label{secsize-E}

The goal of this section is to prove that the expected time for the
size-biased process to complete has logarithmic order, as stated in
Proposition~\ref{propE-upper}.
Following a few simple observations on the process, we will prove this
proposition using two key lemmas, Lemmas~\ref{lemchi-kappa} and \ref
{lemno-A}, whose proofs will appear in Sections~\ref
{secprooflemchi-kappa} and~\ref{secprooflemno-A},
respectively. In Section~\ref{secsize-whp} we extend the proof of this
proposition using some additional ideas to establish that the
coalescence time is bounded by $O(\log n)$ w.h.p.\vadjust{\goodbreak}
%
%
\begin{proposition}
\label{propE-upper}
Let $\tauc=\tauc(\mathcal{S})$ denote the coalescence time of the
size-biased process $\mathcal{S}$. Then there
exists an absolute constant $C>0$ such that $\E_1 [\tauc] \leq C \log n$,
where $\E_1[\cdot]$ denotes expectation w.r.t. an initial cluster
distribution comprised of $n$ singletons.
\end{proposition}

Throughout Sections~\ref{secsize-E} and~\ref{secsize-whp} we refer
only to the size-biased process and use the following notation. Define
the filtration $\cF_t$ to be the $\sigma$-algebra generated by the
process up to and including the $t$th round.
Let $\kappa_t$ denote the number of clusters after the conclusion of
round $t$, noting that with these definitions we are interested in
bounding the expected value of the stopping time
%
%
\begin{equation}
\label{eq-tau-star-def}
\tauc= \min\{t\dvtx \kappa_t = 1\}.
\end{equation}
As mentioned in the \hyperref[intro]{Introduction}, we normalize the cluster-sizes so
that they sum to $1$. Finally, the susceptibility $\chi_t$ denotes the
sum of squares of the cluster-sizes at the end of round $t$.

Observe that by Cauchy--Schwarz, if $w_1,\ldots,w_{\kappa_t}$ are the
cluster-sizes at the end of round $t$ (and as such $\chi_t = \sum_{i}
w_i^2$) then we always have
%
%
\begin{equation}
\label{ineqchi-kappa}
\chi_t \kappa_t
\geq
\Biggl( \sum_{i=1}^{\kappa_t} w_i \Biggr)^2 = 1
\end{equation}
with equality iff all clusters have the same size. Indeed, the
susceptibility $\chi_t$ measures the variance of the cluster-size
distribution. When $\chi_t$ is smaller (closer to~$\kappa_t^{-1}$),
the distribution is more uniform. We further claim that
%
%
\begin{equation}
\label{ineqchi-grow-deterministic}
\chi_{t+1} \leq2\chi_t \qquad\mbox{for all $t$}.
\end{equation}
To see this, note that if a cluster of size $a$ merges with a
cluster of size $b$ the susceptibility increases by exactly
$ (a+b)^2 - (a^2 + b^2) = 2ab \leq a^2 + b^2 $.
Since each round only involves merges between disjoint pairs of
clusters, this immediately implies that the total additive increase in
susceptibility is bounded by the current sum of squares of the cluster
sizes, that is, the current susceptibility $\chi_t$.

Before commencing with the proof of Proposition~\ref{propE-upper}, we
present a trivial linear bound for the expected running time of the
coalescence process, which will later serve as the final step in our
proof. Here and in what follows, $\P_w$ and $\E_w$ denote probability
and expectation given the initial cluster distribution $w$. While the
estimate featured here appears to be quite crude when $w$ is uniform,
recall that in general $\tauc$ can in fact be linear in the initial
number of clusters w.h.p., for example, when $w$ is comprised of one
cluster of mass $1-1/\sqrt{n}$ and $\sqrt{n}$ other clusters of mass
$1/n$ each.
%
%
\begin{lemma}\label{lemtrivial}
Starting from $\kappa$ clusters with an arbitrary cluster distribu\-tion
$w=(w_1,\ldots,w_\kappa)$ we have $\E_w[\tauc] \leq8\kappa$.
Furthermore, $\P_w(\tauc> 16 \kappa) \leq e^{-\kappa/4}$.\vadjust{\goodbreak}
\end{lemma}
\begin{pf} Consider an arbitrary round in which at least $2$ clusters
still remain. We claim that the probability that there is at least
one merge in this round is at least~$\frac{1}{8}$. Indeed, let $\cC_1$
be a cluster of minimal size. The probability that it decides
to send a request is $\frac{1}{2}$, and since there are at least two
clusters and $\cC_1$ is the smallest one, the probability that this request
goes to some $\cC_j$ with $j \neq1$ is at least $\frac{1}{2}$.
Finally, the probability that $\cC_j$ is accepting requests is again
$\frac{1}{2}$. Conditioned on these events, $\cC_j$ will
definitely accept some request (possibly not the one from
$\cC_1$ as another cluster of the same size as $\cC_1$ may have sent
it a
request) leading to at least one merge, as claimed.

The process terminates when the total cumulative number of merges reaches
$\kappa- 1$. Therefore, the time of completion is stochastically
dominated by the sum of $\kappa-1$ geometric random variables with
success probability $\frac18$, and in particular $\E_w[\tauc] \leq
8(\kappa-1)$.

By the same reasoning, the total
number of merges that occurred in the first $t$ rounds clearly
stochastically dominates a binomial variable
$\bin(t,\frac18)$ as long as $t \leq\tauc$. Therefore,
\[
\P_w(\tauc> 16\kappa) \leq\P\bigl( \bin\bigl(16\kappa,\tfrac18\bigr)
\leq\kappa
-1\bigr)
\leq e^{-\kappa/4},
\]
where the last inequality used the well-known Chernoff bounds (see,
e.g.,~\cite{JLR}, Theorem 2.1).
\end{pf}

\subsection{\texorpdfstring{Proof of Proposition \protect\ref{propE-upper} via two key lemmas}
{Proof of Proposition 3.1 via two key lemmas}}
\label{secmain-upper-outline}
We next present the two main lemmas on which the proof of the
proposition hinges.
The key idea is to design a potential function comprised of two parts,
$\Phi_1,\Phi_2$, while identifying a certain event $A_t$ such that the
following holds: $\E[\Phi_1(t+1)-\Phi_1(t)\mid\cF_t,A_t]<c_1<0$ and
$\E[\Phi_2(t+1)-\Phi_2(t)\mid\cF_t] < c_2$, where $c_1,c_2$ are
absolute constants, and a similar statement holds conditioned on
$A_t^c$ when reversing the roles of $\Phi_1$ and $\Phi_2$. At this
point we will establish that an appropriate linear combination of $\Phi
_1,\Phi_2$ is a supermartingale, and the required bound on $\tauc$ will
follow from \textit{optional stopping}. Note that throughout the proof we
make no attempt to optimize the absolute constants involved.
The event $A_t$ of interest is defined as follows.
%
%
\begin{definition}\label{def-At}
Let $A_t$ be the event that the following two properties hold
after the $t$th round:
{\renewcommand\thelonglist{(\roman{longlist})}
\renewcommand\labellonglist{\thelonglist}
\begin{longlist}
\item\label{def-At-i} At least $\kappa_t/2$ clusters have size at most
$1/(600 \kappa_t)$.
\item\label{def-At-ii} The cluster-size
distribution satisfies $\sum_i w_i \one_{\{ w_i < 41/\kappa_t\}} < 4
\cdot10^{-5}$.
\end{longlist}}
\end{definition}

The intuition behind this definition is that property~\ref{def-At-i}
boosts the number of tiny clusters, thereby severely retarding the
growth of the largest clusters, which will tend to see incoming
requests from these tiny clusters. Property~\ref{def-At-ii} ensures
that most of the mass of the cluster-size distribution is on
relatively large clusters, of size at least 41 times the average.\vadjust{\goodbreak}

Examining the event $A_t$ will aid in tracking the variable $\chi_t
\kappa_t$, the normalized susceptibility [recall from (\ref
{ineqchi-kappa}) that this quantity is always at least $1$ and it
equals 1 whenever all clusters are of
the same size]. The next lemma, whose proof appears in Section \ref
{secprooflemchi-kappa}, estimates the expected change in this
quantity and most notably shows that it is at most $-\frac1{200}$ if we
condition on~$A_t$.

%
\begin{lemma}
\label{lemchi-kappa}
Let $\Phi_1(t)=\chi_t \kappa_t$ and suppose that at the end of the
$t$th round one has $\kappa_t \geq2$. Then
%
%
\begin{equation}
\label{eqchi-kappa}
\E[\Phi_1(t+1) - \Phi_1(t) \mid\cF_t] \leq5
\end{equation}
and furthermore,
%
%
\begin{equation}
\label{eqchi-kappa-A}
\E[\Phi_1(t+1) - \Phi_1(t) \mid\cF_t, A_t, \chi_t < 3
\cdot
10^{-7}]
\leq-\tfrac1{200}.
\end{equation}
\end{lemma}

Fortunately, when $A_t$ does not hold the behavior in the next round
can still be advantageous in the sense that in this case the number of
clusters tends to fall by at least a
constant fraction. This is established by the following lemma, whose
proof is postponed to Section~\ref{secprooflemno-A}.
%
%
\begin{lemma}
\label{lemno-A}
Let $\Phi_2(t) = \log\kappa_t$ and suppose that after the $t$th
round one has $\kappa_t \geq2$. Then
%
%
\begin{equation}
\label{eqkappa-drop-no-A}
\E[\Phi_2(t+1) - \Phi_2(t) \mid\cF_t, A^c_t] < - 2
\cdot10^{-7}.
\end{equation}
\end{lemma}

We are now in a position to derive Proposition~\ref{propE-upper} from
the above two lemmas.
\begin{pf*}{Proof of Proposition~\ref{propE-upper}}
Define the stopping time $\tau$ to be
\[
\tau= \min\{i\dvtx \chi_t \geq3 \cdot10^{-7}\}.
\]
Observe that the susceptibility is initially $1/n$, its value is $1$
once the process arrives at a single cluster (i.e., at time $\tauc$)
and until that point it is nondecreasing, hence, $\E\tau\leq\E\tauc
< \infty$ by Lemma~\ref{lemtrivial}.
Further define the random variable
\[
Z_t = \chi_t \kappa_t + 3 \cdot10^7 \log\kappa_t + \frac{t}{200}
.
\]
We claim that $(Z_{t \wedge\tau})$ is a supermartingale.
Indeed, consider $\E[Z_{t+1} \mid\cF_t, \tau> t]$ and note
that the fact that $\tau> t$ implies in particular that $\kappa_t
\geq2$
since in that case $\chi_t < 3\cdot10^{-7} < 1$.
\begin{itemize}
\item If $A_t$ holds then by (\ref{eqchi-kappa-A}) the
conditional expected change in $\chi_t \kappa_t$ is below $-\frac
{1}{200}$, while $\log\kappa_t$ can only decrease (as $\kappa_t$ is
nonincreasing), hence,
$\E[Z_{t+1} \mid\cF_t, A_t, \tau>t ] \leq Z_t$.
\item If $A_t$ does not hold, then by (\ref{eqchi-kappa}) the
conditional expected change in $\chi_t \kappa_t$ is at most $+5$ whereas
the conditional expected change in $\log\kappa_t$ is below $-2 \cdot
10^{-7}$ due to (\ref{eqkappa-drop-no-A}). By the scaling in the
definition of $Z_t$, these add up to give
$\E[Z_{t+1} \mid\cF_t, A^c_t, \tau> t] \leq Z_t - \frac{199}{200}$.
\end{itemize}
Altogether, $(Z_{t \wedge\tau})$ is indeed a supermartingale. As its
increments are bounded and the stopping time $\tau$ is integrable we
can apply the \textit{optional stopping theorem} (see, e.g.,
\cite{Durrett}, Chapter 5)
and get
%
%
\begin{equation}
\label{eqOST}
\E Z_{\tau} \leq Z_0 = \chi_0 \kappa_0 + 3 \cdot10^7 \log\kappa_0 =
O( \log n ).
\end{equation}
At the same time, by definition of $\tau$ we have $\chi_\tau\geq3
\cdot10^{-7}$ and so
%
%
\begin{equation}
\label{eqZT}
Z_\tau
=
\chi_\tau\kappa_\tau+ 3 \cdot10^7 \log\kappa_\tau+ \frac{\tau}{200}
\geq
3 \cdot10^{-7} ( \kappa_\tau+ \tau/8 ).
\end{equation}
Taking expectation in (\ref{eqZT}) and combining it with (\ref
{eqOST}) we find that
\[
\E[ \tau+ 8\kappa_\tau] \leq O( \log n ).
\]
Finally, conditioned on the cluster distribution at time $\tau$ we know
by Lem\-ma~\ref{lemtrivial}
that the expected number of additional rounds it takes the process to
conclude is at most $8 \kappa_\tau$, thus $\E[\tauc] \leq\E[\tau+
8\kappa_\tau]$. We can now conclude that $\E[\tauc] = O( \log n )$,
as required.
\end{pf*}

\subsection{\texorpdfstring{Proof of Lemma \protect\ref{lemchi-kappa}: Estimating the normalized susceptibility when $A_t$ holds}
{Proof of Lemma 3.4: Estimating the normalized susceptibility when $A_t$ holds}}\label{secprooflemchi-kappa}
The first step in controlling the product $\chi_t \kappa_t$ is to
quantify the
coalescence rate in terms of the susceptibility, as achieved by the
following claim.
%
%
\begin{claim}
\label{clmkappa-drop}
Suppose that at the end of the $t$th round one has $\kappa_t
\geq2$. Then
%
%
\begin{equation}
\label{eqkappa-drop}
\E[\kappa_{t+1} \mid\cF_t] \leq\kappa_t - (46\chi_t)^{-1}
\end{equation}
and furthermore,
\[
\P\bigl(\kappa_{t+1} < \kappa_t - (100 \chi_t)^{-1} \mid\cF_t,
\chi
_t < 3 \cdot10^{-7} \bigr) \geq1 - e^{-100}.
\]
\end{claim}
\begin{pf}
To simplify the notation let $\kappa= \kappa_t$, $\chi= \chi_t$ and
$\kappa' = \kappa_{t+1}$ throughout the proof of the claim. Further let
the clusters $\cC_i$ be indexed in
increasing order of their sizes and let $w_i = |\cC_i|$.

Recall that the number of merges in round $t+1$ is precisely the number
of clusters which decide to accept requests and then receive at least
one incoming request from a cluster of size no larger than itself.
Consider the probability of the latter event for a cluster $\cC_i$ with
$i > \lfloor\kappa/2 \rfloor$. Since the clusters are ordered by size
there are at least $\lfloor\kappa/2 \rfloor$ clusters of size at most
$w_i$ and each will send a request to $\cC_i$ independently with
probability $w_i/2$ (the factor of 2 is due to the probability of
issuing rather than receiving requests this round). The probability
that none of these clusters do so is thus at most $(1 -
w_i/2)^{\lfloor\kappa/2 \rfloor} \leq e^{-w_i \kappa/ 6}$ (where we
used the fact that $\lfloor\kappa /2\rfloor\geq\kappa/3$ for any
$\kappa\geq2$), and\vspace*{1pt} altogether the probability that
$\cC_i$ accepts a merge request from one of these clusters is at least
$\frac{1}{2} ( 1 - e^{-w_i \kappa/ 6} )$. Summing over these clusters
we conclude that
\[
\E[\kappa- \kappa' \mid\cF_t]
\geq
\sum_{i > \lfloor\kappa/2 \rfloor}
\frac{1}{2} ( 1 - e^{-w_i \kappa/ 6} )
\geq
\sum_{i=1}^\kappa
\frac{1}{4} ( 1 - e^{-w_i \kappa/ 6} ),
\]
where the last inequality follows from the fact that the summand is
increasing in $w_i$ and hence, the sum over the $\lceil\kappa/2\rceil$
largest clusters
should be at least as large as the sum over the $\lfloor\kappa
/2\rfloor
$ smallest ones.
Next, observe that by concavity, for all $0 \leq w_i \leq6\chi$ the
final summand is at least $w_i \cdot\frac{1}{4} ( 1 - e^{-\chi
\kappa} ) / (6\chi)$ which in turn is at least $ w_i \cdot\frac
{1}{4} ( 1 - e^{-1}
) / (6\chi)$ by (\ref{ineqchi-kappa}). As this last
expression always
exceeds $w_i/(38\chi)$ we get
%
%
\begin{equation}
\label{eq-kappa-kappa}
\E[\kappa- \kappa' \mid\cF_t]
\geq
\frac{1}{38\chi} \sum_{w_i \leq6\chi} w_i.
\end{equation}
We now aim to show that much of the overall mass is
spread on clusters of size at most $6\chi$. To this end recall that by
definition
$\chi= \sum w_i^2 $ while $\sum_i w_i=1$, hence, we can write $\chi=
\E Y$ where $Y$ is the random variable that accepts the value $w_i$
with probability $w_i$ for $i=1,\ldots,\kappa$. This gives that
\[
\sum_{w_i \leq6\chi} w_i = \P(Y \leq6 \E Y) > \frac56
\]
(with the final bound due to Markov's inequality)
and revisiting (\ref{eq-kappa-kappa}) we obtain that
\[
\E[\kappa- \kappa' \mid\cF_t] >
\frac{1}{38\chi} \cdot\frac{5}{6}
>
\frac{1}{46\chi},
\]
establishing inequality (\ref{eqkappa-drop}).

To complete the proof of the claim it suffices to show that the random
variable $X = \kappa- \kappa'$ is suitably concentrated, to which end
we use Talagrand's
inequality (see, e.g.,~\cite{MR}, Chapter 10).
In its following version we say that a function $f\dvtx\prod_i \Omega
_i\to\R
$ is $C$-Lipschitz if changing its argument $\omega$ in any single
coordinate changes $f(\omega)$ by at most $C$, and that $f$ is
$r$-certifiable if for every $s$ and $\omega$ with $f(\omega) \geq s$
there exists a subset $I$ of at most $rs$ coordinates such that every
$\omega'$ that agrees with $\omega$ on the coordinates indexed by
$I$ also has $f(\omega') \geq s$. In the context of a product space
$\Omega=\prod_i\Omega_i$ these definitions carry to the random variable
that $f$ corresponds to via the product measure.\looseness=1

%
\begin{theorem}[(Talagrand's inequality)]
If $X$ is a $C$-Lipschitz and $r$-certifiable random
variable on $\Omega= \prod_{i=1}^n \Omega_i$, then $ \P(|X -
\E X
| > t +\break 60C\sqrt{r\E X})
\leq
4\exp( -t^2/(8 C^2 r \E X))$ for any $0 \leq t \leq\E X$.
\end{theorem}

Observe that round $t+1$, conditioned on $\cF_t$,
is clearly a product space as the actions of the individual clusters are
independent. Formally, each cluster chooses either to accept requests
or to send a request to a random cluster. Changing the action of a
single cluster can only affect $X$, the number of merges in round \mbox{$t+1$},
by at most one merge and so $X$ is
1-Lipschitz. Also, if $X \geq s$ then one can identify $s$ clusters which
accepted merge requests from smaller clusters. By fixing the
decisions of the $2s$ clusters comprising these merges (the acceptors
together with
their corresponding requesters) we must have $X \geq s$
regardless of the other clusters' actions, as the $s$ acceptors
will accept (possibly different) merge-requests no matter what. Thus,
$X$ is also 2-certifiable.

Let $\mu= \E X$ and assume now that $\chi< 3 \cdot10^{-7}$.
By the first part of the proof [equation (\ref{eqkappa-drop})], it
then follows that
$\mu\geq(46\chi)^{-1} > 70\mbox{,}000$, in which case Talagrand's inequality gives
\[
\P\biggl( |X - \mu| > \frac{\mu}{6} + 60\sqrt{2\mu} \biggr)
\leq
4 \exp\bigl(-(\mu/6)^2/(16\mu)\bigr)
=
4 e^{-\mu/576}
<
e^{-100}.
\]
Also, note that our above bound $\mu> 70000 > 2 \cdot180^2$ implies
that
\[
60 \sqrt{2 \mu} < \mu/3,
\]
so in fact the probability of $X$ falling below $\mu- (
\frac{\mu}{6} + \frac{\mu}{3} )$ is at most $e^{-100}$. As
$\mu
\geq(46 \chi)^{-1}$ we conclude that $\kappa- \kappa' = X > (100
\chi)^{-1}$
with probability at least $1 - e^{-100}$, as required.
\end{pf}

As the above claim demonstrated the effect of the susceptibility on the
coalescence rate, we move to study the evolution of the susceptibility.
The critical advantage of the size-biased process is that large
clusters grow more slowly than small clusters. The intuition behind this
is that larger clusters tend to receive more requests, and since
clusters choose to accept their smallest incoming request, these clusters
typically have more choices to minimize over. It turns out that this
effect is enough to produce a useful quantitative bound on the
growth of the susceptibility.
%
%
\begin{claim}
\label{clmchi-grow}
Suppose that after the $t$th round $\kappa_t \geq2$. Then
%
%
\begin{equation}
\label{eqchi-grow}
\E[\chi_{t+1} \mid\cF_t] \leq\chi_t + \frac{5}{\kappa_t}.
\end{equation}
\end{claim}
\begin{pf}
Set $\kappa= \kappa_t$ and $\chi= \chi_t$. Let the clusters $\cC_i$
be indexed in increasing
order of their sizes and let $w_i = |\cC_i|$.
For each cluster $\cC_i$ let the random variable $X_i$
be the size of the smallest cluster that it receives a merge request
from, as long as that cluster is no larger than itself, and not
itself;\vadjust{\goodbreak} otherwise (the case where $\cC_i$ receives no merge requests
from another cluster of size less than or equal to its own) set $X_i = 0$.
Under these definitions we have
%
%
\begin{equation}
\label{eqchi}
\E[\chi_{t+1} \mid\cF_t]
= \chi+ \sum_{i=1}^\kappa w_i \E[X_i],
\end{equation}
since\vspace*{1pt} each $\cC_i$ is an acceptor with probability $\frac12$ and if it
indeed accepts a request from a cluster of size $X_i$ then the
susceptibility will increase by exactly $(w_i+X_i)^2 - (w_i^2 + X_i^2)
= 2w_i X_i$.

Next, note that since we ordered the clusters by increasing order of
size, each of the
first $\lfloor\kappa/2 \rfloor$ clusters has size at
most $2/\kappa$ (otherwise the last $\lceil\kappa/2\rceil$ clusters
would combine to a total mass larger than $1$). We will use this fact
to bound \mbox{$\E[X_i \mid\cF_t]$} by
considering two situations:
\begin{longlist}[(2)]
\item[(1)] If $\cC_i$ receives an
incoming request from at least one of the first $\lfloor
\kappa/2 \rfloor$ clusters (including itself), then $X_i
\leq2/\kappa$ by the above argument. The probability of this is
precisely
$
1 - (
1 - \frac{w_i}{2}
)^{\lfloor\kappa/2 \rfloor}
$
as each of the first $\lfloor\kappa/2 \rfloor$
clusters $\cC_j$ independently sends a request to $\cC_i$ with probability
$w_i/2$ (with the factor of $2$ due to the decision of $\cC_j$ whether
or not to issue requests).
\item[(2)] If $\cC_i$ gets no requests from the first $\lfloor
\kappa/2\rfloor$ clusters, then use the trivial
bound $X_i \leq w_i$.
\end{longlist}
Combining the two cases we deduce that
%
%
\begin{equation}
\label{eqXi}
\E X_i \leq
\biggl(
1 -
\biggl(
1 - \frac{w_i}{2}
\biggr)^{\lfloor\kappa/2 \rfloor}
\biggr)
\frac{2}{\kappa}
+
\biggl(
1 - \frac{w_i}{2}
\biggr)^{\lfloor\kappa/2 \rfloor}
w_i.
\end{equation}
We claim that $\E X_i $ is in fact always at most $5/\kappa$.
To see this, first note that if $w_i \leq2/\kappa$ then this
immediately holds, for example, since $X_i \leq w_i$. Consider therefore
the case where $w_i > 2/\kappa$. Since (\ref{eqXi}) is a weighted
average of $2/\kappa$ and
$w_i>2/\kappa$, it increases whenever the weight on
$w_i$ is increased. As
\[
\biggl(
1 - \frac{w_i}{2}
\biggr)^{\lfloor\kappa/2 \rfloor}
\leq
e^{-(w_i/2) \lfloor\kappa/2 \rfloor}
\leq
e^{-w_i \kappa/6},
\]
we have that, in this case,
\[
\E X_i \leq( 1 - e^{ -w_i \kappa/6 } ) \frac{2}{\kappa}
+ e^{ -w_i \kappa/6 }
w_i \leq
\frac{1}{\kappa}
(
2
+
w_i \kappa e^{ -w_i \kappa/6 }
).
\]
One can easily verify that the function $f(x)=x e^{-x/6}$ satisfies
$f(x) \leq3$ for all~$x$, hence, we conclude that $\E X_i \leq
5/\kappa$
in all cases, as claimed. Plugging this into (\ref{eqchi})
we obtain that
\[
\E[\chi_{t+1} \mid\cF_t]
\leq
\chi+ \frac{5}{\kappa} \sum_{i=1}^\kappa w_i
=
\chi+ \frac{5}{\kappa}
\]
as required.\vadjust{\goodbreak}
\end{pf}

While the last claim allows us to limit the growth of the
susceptibility, this bound is unfortunately too weak in general.
For instance, when used in tandem with Claim~\ref{clmkappa-drop}, it
results in the susceptibility growing out of control, while the number of
clusters decreases slower and slower. Crucially, however, conditioned on
the event $A_t$ (as given in Definition~\ref{def-At}) we can refine
these bounds to show that the growth of $\chi_{t+1}$ slows down
dramatically, as the following claim establishes.
%
%
\begin{claim}
\label{clmchi-grow-A}
Suppose that at the end of the $t$th round $\kappa_t \geq2$. Then
%
%
\begin{equation}
\label{eqchi-grow-A}
\E[\chi_{t+1} \mid\cF_t, A_t] \leq\chi_t + (201 \kappa_t)^{-1}
.
\end{equation}
\end{claim}
\begin{pf}
Let $\kappa= \kappa_t$ and $\chi= \chi_t$, and define the random
variables $X_i$
as in the proof of Claim~\ref{clmchi-grow}. By the same reasoning used
to deduce
inequality (\ref{eqXi}), only now using property~\ref{def-At-i} of
$A_t$ according to which each of the smallest $\lceil\kappa/2\rceil$
clusters has size at most
$1/(600 \kappa_t)$, we have
%
%
\begin{equation}
\label{eqXi-A}
\E X_i \leq \biggl( 1 - \biggl( 1 - \frac{w_i}{2}
\biggr)^{\lceil\kappa/2\rceil} \biggr) \frac{1}{600\kappa} + \biggl( 1
- \frac{w_i}{2} \biggr)^{\lceil\kappa/2\rceil} w_i.
\end{equation}
Recall that equation (\ref{eqchi}) established that $\E[\chi_{t+1}
\mid\cF_t] = \chi
+ \sum_{i=1}^\kappa w_i \E X_i$. This time we will need to bound this
sum more delicately by splitting it into two parts based on
whether or not $w_i < 41/\kappa$. In the case $w_i < 41/\kappa$ we can
use the trivial bound $X_i \leq w_i $ to arrive at
\[
\sum_{i} w_i \one_{\{w_i < 41/\kappa\}} \E X_i <
\sum_{i} w_i \one_{\{w_i < 41/\kappa\}} \frac{41}{\kappa}
<
4 \cdot10^{-5} \cdot\frac{41}{\kappa},
\]
where the last inequality is by property~\ref{def-At-ii} of $A_t$.
For the second part of the summation we use the same weighted mean
argument from the proof of Claim~\ref{clmchi-grow} to deduce that when
$w_i > (600\kappa)^{-1}$, the right-hand side of (\ref{eqXi-A})
increases with the weight on $w_i$, which in turn is at most
$(1 - \frac{w_i}2)^{\lceil\kappa/2\rceil} \leq\exp(-w_i
\kappa
/4)$. In particular, in case $w_i \geq41/\kappa$, we have
\begin{eqnarray*}
\E X_i &\leq&
( 1 - e^{ -w_i \kappa/4 } ) \frac{1}{600\kappa}
+ e^{ -w_i \kappa/4 } w_i
\leq
\frac{1}{\kappa} \biggl( \frac{1}{600} + w_i \kappa e^{ -w_i
\kappa/4
}\biggr)\\
&\leq&
\frac{1}{\kappa}
\biggl(\frac{1}{600} + 41 e^{ -41/4 }\biggr)
\end{eqnarray*}
(here we used the fact that the function $xe^{-x/4}$ is decreasing for
$x \geq41$). Combining our bounds,
\[
\sum_{i=1}^\kappa w_i \E X_i
\leq
\frac{1}{\kappa}
\biggl(
4 \cdot10^{-5} \cdot41
+
\sum_{i} w_i \one_{\{w_i \geq41/\kappa\}} \biggl( \frac{1}{600} + 41
e^{ -41/4 } \biggr)
\biggr) <
\frac{1}{201 \kappa}
\]
since $\sum_{i} w_i = 1$. Together with
(\ref{eqchi}), the proof is complete.
\end{pf}

Combining the bound on $\kappa_{t+1}$ in Claim~\ref{clmkappa-drop}
with the bounds on $\chi_{t+1}$ from Claims~\ref{clmchi-grow}
and \ref
{clmchi-grow-A} will now result in the statement of Lemma~\ref{lemchi-kappa}.
\begin{pf*}{Proof of Lemma~\ref{lemchi-kappa}}
For convenience let $\kappa= \kappa_t$ and $\chi= \chi_t$, as well as
$\kappa' = \kappa_{t+1}$ and $\chi' = \chi_{t+1}$.
The first statement of the lemma is an immediate consequence of
Claim~\ref{clmchi-grow} since $\kappa' \leq\kappa$ and so
\[
\E[\chi' \kappa' \mid\cF_t]
\leq
\kappa\E[\chi' \mid\cF_t]
\leq
\kappa\biggl(
\chi+ \frac{5}{\kappa}
\biggr)
=
\chi\kappa+ 5.
\]
For the second statement, since we can break down $\chi'\kappa'$ into
\begin{eqnarray*}
\chi'\kappa' &=& \chi' \biggl( \kappa- \frac{1}{100\chi} \biggr) + \chi'
\biggl( \kappa' - \kappa+ \frac{1}{100\chi} \biggr) \one_{\{\kappa'
\geq\kappa- {1}/({100\chi})\}}\\
&&{} + \chi' \biggl( \kappa' - \kappa+
\frac{1}{100\chi} \biggr) \one_{\{\kappa' < \kappa-
{1}/({100\chi})\}},
\end{eqnarray*}
noticing that the last expression in the right-hand side is at most
$0$, and recalling that $0 < \chi\leq\chi' \leq2\chi$ [due to
(\ref{ineqchi-grow-deterministic})] and $1 \leq\kappa' \leq
\kappa$, we now obtain that $\E[\chi' \kappa' \mid\cF_t,
A_t
,\chi< 3 \cdot10^{-7}]$ is at most
\begin{eqnarray*}
&&\E\biggl[\chi' \biggl( \kappa- \frac{1}{100\chi} \biggr) \Bigm|
\cF
_t, A_t,\chi< 3 \cdot10^{-7}\biggr]\\
&&\quad{}+ \E\biggl[2\chi\cdot\frac{1}{100\chi} \one_{\{\kappa' \geq
\kappa
-1/({100\chi})\}} \Bigm| \cF_t, A_t,\chi< 3 \cdot
10^{-7}
\biggr] \\
&&\qquad= \biggl( \kappa- \frac{1}{100\chi} \biggr) \E[\chi'
\mid
\cF_t, A_t,\chi< 3 \cdot10^{-7}]\\
&&\qquad\quad{}
+ \frac1{50}\P\biggl( \kappa' \geq\kappa-\frac1{100\chi}
\Bigm| \cF
_t, A_t,\chi< 3 \cdot10^{-7}\biggr).
\end{eqnarray*}
Applying Claims~\ref{clmkappa-drop} and~\ref{clmchi-grow-A} now gives
\begin{eqnarray*}
\E[\chi' \kappa' \mid\cF_t, A_t,\chi< 3 \cdot
10^{-7}]
&\leq&\biggl( \kappa- \frac{1}{100\chi} \biggr) \biggl( \chi+
\frac
{1}{201\kappa} \biggr)
+ \frac{1}{50} e^{-100} \\
&<& \chi\kappa- \frac{1}{100} + \frac{1}{201} + \frac{1}{50} e^{-100}\\
&<& \chi\kappa- \frac{1}{200}
\end{eqnarray*}
and the proof is complete.
\end{pf*}

\subsection{\texorpdfstring{Proof of Lemma \protect\ref{lemno-A}: Estimating the number of components when $A_t$ fails}
{Proof of Lemma 3.5: Estimating the number of components when $A_t$ fails}}\label{secprooflemno-A}

We wish to show that whenever either one of the two properties
specified in $A_t$ does not hold, the expected number of clusters drops
by a constant factor.\vadjust{\goodbreak}

Suppose that property~\ref{def-At-i} of $A_t$ fails. In this case a
constant fraction of the clusters have size which is at least a
constant fraction of the average size $1/\kappa_t$. We will show that
each such cluster receives an incoming request (from another cluster of
no larger size) in the next round with a probability that is uniformly
bounded from below. Consequently, we will be able to conclude that the
number of clusters shrinks by at least a constant factor in expectation.
%
%
\begin{claim}
\label{clmno-Ai}
Suppose that at the end of the $t$th round $\kappa_t \geq2$
and property~\ref{def-At-i} of $A_t$ does not hold, that is, more
than $\kappa_t/2$ clusters have size greater than
$(600 \kappa_t)^{-1}$. Then
%
%
\begin{equation}
\label{eqkappa-drop-no-Ai}
\E[\kappa_{t+1} \mid\cF_t]
\leq
(1 - 5 \cdot10^{-5}) \kappa_t.
\end{equation}
\end{claim}
\begin{pf}
Let $\kappa= \kappa_t$ and $\kappa' = \kappa_{t+1}$ and as usual,
order the clusters by increasing order of size. Consider an arbitrary cluster
$\cC_i$ which is one of the last $\lceil\kappa/2 \rceil$ clusters, and
let $w_i$ denote its size. If $\cC_i$ opts to
accept requests in this round (with probability $\frac12$) and any of the
first $\lfloor\kappa/2\rfloor$ clusters sends it a
request, it will contribute a merge in this round. This occurs
with probability
\[
\frac{1}{2} \biggl( 1 - \biggl( 1 - \frac{w_i}{2} \biggr)^{\lfloor
\kappa
/2 \rfloor} \biggr)
\geq
\frac{1}{2}
(
1 - e^{ -w_i \kappa/6 }
) > \frac{1}{2} (1 -
e^{-1/3600}) > 10^{-4},
\]
where we used our assumption that $w_i \geq(600 \kappa)^{-1}$. Thus,
the probability
that $\cC_i$ contributes to a merge is at least $10^{-4}$. We conclude
that the expected number of merges in this round is at least $10^{-4}
\lceil\kappa/2 \rceil$, from which the desired
result follows.
\end{pf}

Now suppose that property~\ref{def-At-ii} of $A_t$ fails. Here at
least a
constant proportion of the mass of the cluster-size distribution falls
on clusters with size at most a constant multiple of the average
size. Such clusters behave nicely
as in this window the relation between the cluster-size and the
typical number of incoming requests can be bounded by a linear
function. Again, this will result in a constant proportion of
clusters merging in the next round in expectation.
%
%
\begin{claim}
\label{clmno-Aii}
Suppose that at the end of the $t$th round $\kappa_t \geq2$
and property~\ref{def-At-ii} of $A_t$ does not hold, that is,
$\sum_i w_i \one_{\{ w_i < 41/\kappa_t\}} \geq4 \cdot10^{-5}$,
where $w_i$ denotes the size of $\cC_i$. Then
%
%
\begin{equation}
\label{eqkappa-drop-no-Aii}
\E[\kappa_{t+1} \mid\cF_t]
\leq
(1 - 2 \cdot10^{-7}) \kappa_t.
\end{equation}
\end{claim}
\begin{pf}
Let $\kappa= \kappa_t$ and $\kappa' = \kappa_{t+1}$. Order the
clusters by size and let $r$ be the number of clusters
which are smaller than $41/\kappa$. Since clearly at most
$\kappa/41$ clusters can have size at least\vadjust{\goodbreak}
$41/\kappa$, we have $r \geq\lceil
\frac{40}{41} \kappa\rceil$. Notice that since $\kappa\geq2$,
this implies
that in particular $\lfloor r/2\rfloor\geq\kappa/3$.
By the same arguments as before,
each cluster $\cC_i$ with $\lfloor r/2 \rfloor< i \leq
r$ will accept a merge request from a smaller cluster with probability
at least
\[
\frac{1}{2}
\biggl(
1 -
\biggl( 1 - \frac{w_i}{2} \biggr)^{\lfloor r/2 \rfloor}
\biggr)
\geq\frac{1}{2}\bigl( 1 - e^{ -(w_i/2) \lfloor r/2 \rfloor} \bigr)
\geq\frac{1}{2}( 1 - e^{ -w_i \kappa/6} ).
\]
Since we are concentrating our attention on
the clusters of size $w_i < 41/\kappa$, concavity implies that
the last expression is actually at least
\[
\frac{1}{2}
( 1 - e^{ -41/6} )
\frac{w_i}{41/\kappa}
>
\frac{w_i \kappa}{100}.
\]
We conclude that the expected number of merges in
this round is at least
\[
\sum_{i = \lfloor r/2\rfloor+ 1}^r
\frac{w_i \kappa}{100}
\geq
\frac{\kappa}{100} \cdot\frac{1}{2} \sum_{i=1}^r w_i
\geq\frac{\kappa}{100} \cdot\frac{1}{2} \cdot4 \cdot10^{-5}
=
2 \cdot10^{-7} \kappa,
\]
where we used the fact that the $w_i$'s are sorted in increasing
order to relate the sum over the cluster indices $\lfloor r/2\rfloor
+1,\ldots,r $ to the one over
the first $r$ clusters. This gives the
desired result.
\end{pf}
\begin{pf*}{Proof of Lemma~\ref{lemno-A}}
The proof readily follows from the combination of Claims \ref
{clmno-Ai} and~\ref{clmno-Aii}. Indeed, these claims establish that
whenever the event $A_t$ fails we have
\[
\E[\kappa_{t+1} \mid\cF_t, A_t^c] \leq
(1 - 2 \cdot10^{-7}) \kappa_t.
\]
Therefore, by the concavity of the logarithm, Jensen's inequality
implies that
\begin{eqnarray*}
\E[\log\kappa_{t+1} \mid\cF_t, A_t^c]
&\leq&
\log\E[\kappa_{t+1} \mid\cF_t, A_t^c]
\leq
\log\kappa_t + \log(1 - 2 \cdot10^{-7})\\
&<&
\log\kappa_t - 2 \cdot10^{-7}
\end{eqnarray*}
as required.
\end{pf*}

\section{Optimal upper bound for size-biased process}
\label{secsize-whp}

We now prove the upper bound in Theorem~\ref{thm-1} by building upon
the ideas of the previous section. Recall that in the proof of
Proposition~\ref{propE-upper} we defined the sequence
\[
Z_t
=
\chi_t \kappa_t + M \log\kappa_t + \frac{t}{200}
\qquad\mbox
{where $M
= 3 \cdot10^7$},
\]
established that it was a supermartingale and derived the required
result from optional
stopping. That approach was only enough to produce a bound on $\E
[\tauc
]$, the
expected completion time. For the stronger result on the typical value
of $\tauc$ we will analyze $(Z_t)$ more delicately. Namely, we estimate
its increments in $\cL^2$\vadjust{\goodbreak} to qualify an application of an appropriate
Bernstein--Kolmogorov large-deviation inequality for supermartingales
due to Freedman~\cite{Freedman}.

An important element in our proof is the modification of the above
given variable $Z_t$ into an overestimate $Y_t$ which allows
far better control over the increments in $\cL^2$. This is defined as
%
%
\begin{eqnarray}\label{eq-Y0-def}
Y_0 &=& Z_0 = \chi_0 \kappa_0 + M \log\kappa_0 = 1 + M \log
n,\nonumber\\[-8pt]\\[-8pt]
Y_{t+1} &=& \cases{
\displaystyle Y_t + (
\Xi_{t+1} \wedge\log^{2/3} n
)
+ M \log\frac{\kappa_{t+1}}{\kappa_{t}}
+ \frac1{200}, &\quad if $\tauc> t$,\vspace*{2pt}\cr
Y_t, &\quad if $\tauc\leq t$,}
\nonumber
\end{eqnarray}
where
\[
\Xi_{t+1}
=
\chi_{t+1} \biggl(
\kappa_{t+1} \vee\biggl( \kappa_{t} - \frac1{\chi_{t}} \biggr)
\biggr)
- \chi_{t}\kappa_{t}.
\]
The purpose of the $( \kappa_t - \frac{1}{\chi_t} )$ term is
to limit the potential decrease from negative $\Xi$. In this section,
we will need two-sided estimates (in addition to one-sided bounds such as
those used in the previous section) due to the fact that we must control
the $\cL^2$ increments.

It is clear that $Y_{t+1}-Y_{t} \geq Z_{t+1}-Z_{t}$ as long as $t <
\tauc$ and $\Xi_{t+1} \leq\log^{2/3} n$. Therefore, setting
\[
\bar{\tau} = \min\{t\dvtx\Xi_{t+1} > \log^{2/3} n\},
\]
it follows that
%
%
\begin{equation}
\label{eqYZ}
Y_{t} \geq Z_t \qquad\mbox{for all $t \leq\tauc\wedge\bar{\tau}$.}
\end{equation}
In what follows we will establish a large deviation estimate for
$(Y_t)$, then use this overestimate for $Z_t$ to show that w.h.p.
$\tauc= O(\log n)$. We thus focus our attention on the sequence $(Y_t)$.
%
%
\begin{lemma}
\label{lemY-supermart}
The sequence $(Y_t)$ is a supermartingale.
\end{lemma}
\begin{pf}
Since by definition $Y_t = Y_{t \wedge\tauc}$, it suffices to
consider the times $t < \tauc$. As we clearly have $(\kappa_{t+1}
\vee(\kappa_{t} - \frac1{\chi_{t}})) \leq\kappa_t$ and
Claim~\ref{clmchi-grow} established that $\E[ \chi_{t+1} \mid
\cF_t ] \leq\chi_t + \frac{5}{\kappa_t}$, we can deduce that
%
%
\begin{equation}
\label{eqXi-5}
\E[ \Xi_{t+1} \mid\cF_t ] \leq5.
\end{equation}
Combined with Lemma~\ref{lemno-A} as in the proof of
Proposition~\ref{propE-upper}, it then follows that
\[
\E[ Y_{t+1} \mid\cF_t,A_t^c] \leq0.
\]
We turn to consider $ \E[ Y_{t+1} \mid\cF_t,A_t] $. Since
$\kappa_{t+1} \leq\kappa_t$ holds for all $t$, it suffices to show
that
\[
\E[ \Xi_{t+1} \mid\cF_t,A_t ] \leq-\frac1{200}.\vadjust{\goodbreak}
\]
Indeed, as in the proof of Lemma~\ref{lemchi-kappa}, we write
\begin{eqnarray*}
\Xi_{t+1} &\leq&\chi_{t+1}\biggl(\kappa_t - \frac{1}{100\chi
_t}\biggr)\\
&&{}+ \chi_{t+1}\biggl[ \biggl( \kappa_{t+1} \vee\biggl( \kappa_t - \frac
{1}{\chi_t} \biggr) \biggr)
-\kappa_t +
\frac{1}{100\chi_t}\biggr]\one_{\{\kappa_{t+1} \geq
\kappa_t-1/({100\chi_t})\}}
- \chi_t\kappa_t \\
&\leq&\chi_{t+1}\biggl(\kappa_t - \frac{1}{100\chi_t}\biggr) +
2\chi_t\cdot\frac1{100\chi_t} \one_{\{\kappa_{t+1} \geq
\kappa_t-1/({100\chi_t})\}}- \chi_t\kappa_t,
\end{eqnarray*}
which as stated before gives rise to
\[
\E[\Xi_{t+1} \mid\cF_t, A_t] < - \tfrac{1}{100} +
\tfrac{1}{201} + \tfrac{1}{50}
e^{-100} < - \tfrac{1}{200},
\]
and we conclude that $(Y_t)$ is indeed a supermartingale, as required.
\end{pf}
%
%
\begin{lemma} The increments of the supermartingale $(Y_t)$ are
uniformly bounded in $\cL^2$. Namely, for every $t$ we have $\E
[(Y_{t+1}-Y_t)^2 \mid\cF_t] < 2 M^2$ where $M = 3\cdot10^7$.
\end{lemma}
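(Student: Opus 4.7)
The plan is as follows. The increment vanishes on $\{t \geq \tauc\}$ by the very definition of $(Y_t)$, so I would focus on the range $t < \tauc$, where
\[
Y_{t+1}-Y_t = \bigl(\Xi_{t+1} \wedge \log^{2/3} n\bigr) + M\log\frac{\kappa_{t+1}}{\kappa_t} + \tfrac{1}{200}.
\]
I would then derive a \emph{deterministic} two-sided bound on each of the three summands, combine them into a pointwise estimate on $|Y_{t+1}-Y_t|$ of order $M$, and finally square and take conditional expectations.

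The middle summand is controlled by a purely structural observation: in any single round each merge pairs a distinct acceptor with a distinct requester, so at most $\lfloor \kappa_t/2 \rfloor$ merges can occur. Hence $\kappa_{t+1} \in [\lceil \kappa_t/2 \rceil,\,\kappa_t]$, which forces $\log(\kappa_{t+1}/\kappa_t) \in [-\log 2,\,0]$, giving $|M\log(\kappa_{t+1}/\kappa_t)| \leq M\log 2$ pointwise.

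The truncated susceptibility piece satisfies $\Xi_{t+1} \wedge \log^{2/3} n \leq \log^{2/3} n$ by the very definition of the truncation, while the matching lower bound $\Xi_{t+1} \geq -1$ uses two ingredients I would combine. First, every merge of clusters of sizes $a$ and $b$ alters $\chi$ by $(a+b)^2 - a^2 - b^2 = 2ab \geq 0$, so $\chi_{t+1} \geq \chi_t$. Second, Cauchy--Schwarz~\eqref{ineq:chi-kappa} gives $\chi_t \kappa_t \geq 1$, whence $\kappa_t - 1/\chi_t \geq 0$. Substituting these into the definition of $\Xi_{t+1}$ I obtain
\[
\Xi_{t+1} \geq \chi_{t+1}\bigl(\kappa_t - 1/\chi_t\bigr) - \chi_t \kappa_t \geq \chi_t\bigl(\kappa_t - 1/\chi_t\bigr) - \chi_t \kappa_t = -1,
\]
so $\Xi_{t+1} \wedge \log^{2/3} n \in [-1,\,\log^{2/3} n]$.

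Combining the three bounds yields the pointwise estimate $|Y_{t+1}-Y_t| \leq \log^{2/3} n + M\log 2 + 1 + \tfrac{1}{200}$. As long as $\log^{2/3} n$ is negligible relative to the absolute constant $M = 3\cdot 10^7$ (more precisely, as long as $\log^{2/3} n \leq (\sqrt{2}-\log 2)M - 2$, which comfortably holds throughout the range of $n$ of interest), squaring and using $\log 2 < 1$ yields $(Y_{t+1}-Y_t)^2 < 2M^2$ pointwise, and taking conditional expectations concludes the proof. The main (mild) obstacle is identifying the precise combination of the monotonicity $\chi_{t+1} \geq \chi_t$ with the Cauchy--Schwarz inequality needed to extract the sharp lower bound $-1$ on $\Xi_{t+1}$; the rest is a direct computation.
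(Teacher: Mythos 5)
There is a genuine gap: your bound is not uniform in $n$, and uniformity is exactly what the lemma is for. You reduce everything to the pointwise estimate $|Y_{t+1}-Y_t|\le \log^{2/3}n+M\log 2+1+\tfrac{1}{200}$ and then square, which yields $(Y_{t+1}-Y_t)^2<2M^2$ only under the side condition $\log^{2/3}n\lesssim (\sqrt2-\log 2)M$. Since $M=3\cdot10^7$ is a fixed absolute constant, this fails for all sufficiently large $n$, whereas the lemma (and its downstream use) requires the constant $2M^2$ to be independent of $n$. This is not a cosmetic issue: the lemma feeds into Freedman's inequality with $t_0=\Theta(\log n)$ and deviation $s=\log^{3/4}n$, so one needs the per-step conditional variance to be $o(\log^{1/2}n)$ in order that $V_{t_0}=o(s^2)$. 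Your pointwise bound only gives $(Y_{t+1}-Y_t)^2\lesssim \log^{4/3}n$, hence $V_{t_0}\lesssim\log^{7/3}n\gg s^2$, and the concentration argument collapses. The truncation at $\log^{2/3}n$ in the definition of $Y$ exists precisely because $\Xi_{t+1}$ genuinely can be that large, so no pointwise argument can recover an $O(1)$ second moment.

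The missing ingredient is a conditional second-moment computation for $\Xi_{t+1}$ itself. The paper first splits $(Y_{t+1}-Y_t)^2\le 3(\Xi_{t+1})^2+3(M\log\frac{\kappa_{t+1}}{\kappa_t})^2+3(\tfrac1{200})^2$, handles the last two terms exactly as you do (via $\kappa_t/2\le\kappa_{t+1}\le\kappa_t$), and then proves $\E[(\Xi_{t+1})^2\mid\cF_t]<60$. This uses $|\Xi_{t+1}|\le\kappa_t(\chi_{t+1}-\chi_t)+1$ (your two-sided analysis of $\Xi_{t+1}$, including the $-1$ lower bound via $\chi_{t+1}\ge\chi_t$ and $\kappa_t\ge 1/\chi_t$, is correct and matches this part), followed by writing $\chi_{t+1}-\chi_t=\sum_i 2w_iX_iI_i$, applying convexity in the weights $w_i$, and bounding $\E X_i^2\le 24/\kappa_t^2$ by the same case analysis that gave $\E X_i\le 5/\kappa_t$ in Claim~\ref{clm:chi-grow}. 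That computation, which exploits the structure of the process rather than a worst-case bound, is the technical heart of the lemma and is what your proposal omits.
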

\begin{pf}
First observe that
%
%
\begin{equation}
\label{eqY-CS}
(Y_{t+1}-Y_t)^2
\leq
3 (\Xi_{t+1})^2
+ 3 \biggl( M \log\frac{\kappa_{t+1}}{\kappa_t} \biggr)^2
+ 3 \biggl( \frac{1}{200} \biggr)^2.
\end{equation}
Since $\frac{1}{2} \kappa_t \leq\kappa_{t+1} \leq\kappa_t$, we
have $- M \log2 \leq M \log\frac{\kappa_{t+1}}{\kappa_t} \leq0$,
hence, the last two expressions above sum to, at most, $\frac32 M^2$
(with room to spare)
and it remains to bound $\E[(\Xi_{t+1})^2 \mid\cF_t] =O(1)$ for a
suitably small implicit constant.

Observe that when $\Xi_{t+1} \geq0$ we must have $ |\Xi_{t+1}|
\leq\chi_{t+1} \kappa_t - \chi_t\kappa_t$ since $(\kappa_{t+1}
\vee(\kappa_{t} - \frac1{\chi_{t}})) \leq
\kappa_t$. Conversely, if $\Xi_{t+1} \leq0$ then necessarily
$|\Xi_{t+1}| \leq\chi_t \kappa_t - \chi_{t+1} (\kappa_{t} -
\frac1{\chi_{t}}) \leq1$, with the\vspace*{1pt} last inequality due to the
fact that $\kappa_t \geq1/\chi_t$ and $\chi_{t+1} \geq
\chi_t$. Combining the cases we deduce that, in particular,
\[
|\Xi_{t+1}| \leq\kappa_t (\chi_{t+1}-\chi_t) + 1.
\]
By Claim~\ref{clmchi-grow} we have $\E[ \chi_{t+1}-\chi_t\mid\cF_t]
\leq5/\kappa_t$, hence, we get
%
%
\begin{eqnarray}\label{eq-Xi-sq}
\E[ (\Xi_{t+1})^2 \mid\cF_t ]
&\leq&\kappa_t^2 \E[ (\chi_{t+1}-\chi_t)^2 \mid\cF_t ] + 1 + 2
\kappa
_t (5/\kappa_t)\nonumber\\[-8pt]\\[-8pt]
&\leq&\kappa_t^2 \E[ (\chi_{t+1}-\chi_t)^2 \mid\cF_t ] + 11.\nonumber
\end{eqnarray}
It remains to show that $\E[ (\chi_{t+1}-\chi_t)^2 \mid\cF_t
]=O(1/\kappa_t^2)$. To do so, let $w_1, \ldots,\break w_{\kappa_t}$ be the
cluster-sizes after the $t$th round and recall
that by (\ref{eqchi}) and the arguments following it we have
\[
\E[(\chi_{t+1}-\chi_t)^2 \mid\cF_t]
=
\E\Biggl[ \Biggl(\sum_{i=1}^{\kappa_t} 2 w_i X_i I_i\Biggr)^2 \Biggr],
\]
where each $X_i$ is a nonnegative random variable satisfying
$\E X_i \leq5/\kappa_t$ (marking the size of another cluster of no
larger size that issued a request to $\cC_i$ or 0 if there was no such
cluster) and each $I_i$ is a Bernoulli($\frac12$) variable independent
of $X_i$ (indicating whether or not $\cC_i$ chose to accept requests).
Since $\sum w_i = 1$, it follows from convexity that
\[
\Biggl(\sum_{i=1}^{\kappa_t} w_i X_i I_i \Biggr)^2
\leq
\sum_{i=1}^{\kappa_t} w_i X_i^2 I_i,
\]
hence, taking expectation while recalling that $I_i$ and $X_i$ are independent,
\[
\E[(\chi_{t+1}-\chi_t)^2 \mid\cF_t] \leq4 \sum_{i=1}^{\kappa_t} w_i
(\E X_i^2 )\P(I_i) = 2 \sum_{i=1}^{\kappa_t} w_i \E X_i^2,
\]
and it remains to bound $\E X_i^2$. Following the same argument that
led to (\ref{eqXi}) now gives
\[
\E X_i^2 \leq\biggl(1 - \biggl(1 - \frac{w_i}{2} \biggr)^{\lfloor
\kappa
_t/2 \rfloor} \biggr) \biggl( \frac{2}{\kappa_t}\biggr)^2 +
\biggl(1 - \frac{w_i}{2} \biggr)^{\lfloor\kappa_t/2 \rfloor} w_i^2.
\]
As before, we now deduce that either $w_i \leq2/\kappa_t$, in which
case clearly $\E X_i^2 \leq4/\kappa_t^2$, or we have
\[
\E X_i^2 \leq( 1 - e^{-w_i\kappa_t/6})\frac{4}{\kappa_t^2}
+ e^{-w_i\kappa_t/6}w_i^2
\leq\frac1{\kappa_t^2} \bigl(4 + e^{-w_i\kappa_t/6} (w_i\kappa_t)^2
\bigr).
\]
Since $x^2\exp(-x/6) < 20$ for all $x\geq0$, it then follows that
$\E X_i^2 < 24/\kappa_t^2$ (with room to spare). Either way we deduce that
\[
\E[(\chi_{t+1}-\chi_t)^2 \mid\cF_t] < 2 \sum_i ( w_i \cdot
24/\kappa_t^2)= 48/\kappa_t^2
\]
and so, going back to (\ref{eq-Xi-sq}),
%
%
\begin{equation}
\label{eqXi2-60}
\E[(\Xi_{t+1})^2 \mid\cF_t] < 48 + 11 < 60.
\end{equation}
Using this bound in (\ref{eqY-CS}) we can conclude the proof as we have
\[
\E[ (Y_{t+1}-Y_t)^2 \mid\cF_t] < 3 \E[(\Xi_{t+1})^2 \mid\cF_t] +
\tfrac32M^2 < 2M^2.
\]
\upqed
\end{pf}

By now we have established that $(Y_t)$ is a supermartingale which
satisfies $Y_{t+1} - Y_t \leq L$ for a value of $L = \log^{2/3} n +
\frac{1}{200}$ and that, in addition, $\E[(Y_{t+1} -Y_t)^2 \mid\cF_t]
\leq2M^2$.
We are now in a position to apply the following inequality due to
Freedman~\cite{Freedman}; we note that this result
was originally stated for martingales yet its proof, essentially
unmodified, extends also to supermartingales.
%
%
\begin{theorem}[(\cite{Freedman}, Theorem 1.6)]
\label{thmfreedman}
Let $(S_i)$ be a supermartingale with respect to a filter
$(\cF_i)$. Suppose $S_{i}-S_{i-1} \leq L$ for all $i$, and write
$V_t = \sum_{i=1}^t \E[(S_i-S_{i-1})^2 \mid\cF_{i-1}]$. Then for
any $s,v>0$,
\[
\P(
\{S_t \geq S_0 + s, V_t \leq v\} \mbox{ for some $t$}
)
\leq
\exp\bigl( -\tfrac12 s^2/(v + L s ) \bigr).
\]
\end{theorem}

By the above theorem and a standard application of optional stopping,
for any $s>0$, integer $t$ and stopping time $\tau$ we have $\P( Y_{t
\wedge\tau} \geq Y_0 + s )
\leq\exp( -\frac12 s^2/(2M^2 t + Ls) ) $.
In particular, letting
\[
t_0 = 500 M \log n
\]
and plugging $s = \log^{3/4} n$ and $\tau= \bar{\tau}$ in the last
inequality we deduce that
\[
\P( Y_{t_0 \wedge\bar{\tau}} \geq Y_0 + \log^{3/4} n )
\leq
\exp\bigl( -\bigl(\tfrac12-o(1)\bigr)\log^{1/12} n \bigr) =o(1).
\]
Hence, recalling the value of $Y_0$ from (\ref{eq-Y0-def}) we have w.h.p.
%
%
\begin{equation}
\label{eq-Y-t0-bar-tau}
Y_{t_0 \wedge\bar{\tau}} \leq1 + M\log n + \log^{3/4}n \leq2M\log n
,
\end{equation}
where the last inequality holds for sufficiently large $n$.

In order to compare $t_0$ and $\bar{\tau}$, recall from
(\ref{eqXi-5}) that $\E[ \Xi_{t+1} \mid\cF_t ] \leq5$,
whereas we
established in (\ref{eqXi2-60}) that $\E[ (\Xi_{t+1})^2
\mid
\cF_t]
< 60$. By Chebyshev's inequality,
\[
\P( \Xi_{t+1} \geq\log^{2/3}n \mid\cF_t )
=
O(
\E[(\Xi_{t+1})^2 \mid\cF_t] \log^{-4/3} n
)
=
O( \log^{-4/3} n ).
\]
In particular, a union bound implies that
\[
\P(\bar{\tau} \leq t_0) = O(\log^{-1/3} n).
\]
Revisiting (\ref{eq-Y-t0-bar-tau}) this immediately implies that w.h.p.
\[
Y_{t_0}\leq2M \log n,
\]
and since $Y_{t_0 \wedge\bar{\tau} \wedge\tauc}
\geq Z_{t_0 \wedge\bar{\tau} \wedge\tauc}$ [due to (\ref{eqYZ})],
we further have that w.h.p.
\[
Y_{t_0 \wedge\tauc}
\geq
Z_{t_0 \wedge\tauc}
\geq
\frac{ t_0 \wedge\tauc}{200}.
\]
Therefore, we must have $\tauc< t_0$ w.h.p., otherwise the last two
inequalities would contradict our choice of $t_0 = 500 M \log n$. The
proof is complete.

\section{Super-logarithmic lower bound for the uniform process}
\label{secuniform}
In this section we use the analytic approximation framework introduced
by Schramm to prove the super-logarithmic lower bound stated in
Theorem~\ref{thm-1} for the coalescence time of the uniform process.
Recall that a key element in this framework is the normalized Laplace
transform of the cluster-size distribution, namely, $G_t(s) = (1/\kappa
_t) F_t(\kappa_t s)$, where $F_t(s) = \sum_{i=1}^{\kappa_t} e^{-w_i s}$
(see Definition~\ref{def-F-G}). The following proposition, whose proof
entails most of the technical difficulties in our analysis of the
uniform process, demonstrates the effect of $G_t(\frac12)$ and $G_t(1)$
on the coalescence rate.
%
%
\begin{proposition}
\label{propG-evol-conc}
Let $\epsilon_t = 1 - G_t( \frac{1}{2} )$ and $\zeta_t = G_t(1)$.
There exists an absolute constant $C>0$ such that, conditioned on $\cF
_t$, with probability at least $1- C \kappa_t^{-100}$, we have
%
%
\begin{eqnarray}
\label{eqK-evol-conc}
&\displaystyle |\kappa_{t+1} - ( 1 - \epsilon_t/2) \kappa_t| \leq
\kappa
_t^{2/3}, &\\
\label{eqG-evol-conc}
&\displaystyle  \zeta_{t+1} \geq\zeta_t + \epsilon_t^{13/\epsilon_t} - 8\kappa
_t^{-1/3}.&
\end{eqnarray}
\end{proposition}

We postpone the proof of this proposition to Section \ref
{secG-evol-conc} in favor of showing how the relations that it
establishes between $\kappa_t,G_t(1),G_t(\frac12)$ can be used to
derive the desired lower bound on $\tauc$.
We claim that as long as $\kappa_t,G_t(\frac12),G_t(1)$ satisfy
equations (\ref{eqK-evol-conc}), (\ref{eqG-evol-conc}) and $t = O
(\log n \cdot\frac{\log\log\log n}{\log\log n})$, then
$\kappa_t
\geq n^{3/4}$; this deterministic statement is given by the following lemma.

%
\begin{lemma}
\label{lemseq-evol}
Set $T = \frac{1}{75} \log n \cdot\frac{\log\log n}{\log\log\log
n}$ for a sufficiently large $n$
and let $\kappa_0,\ldots,\kappa_T$ be a sequence of integers in $\{
1,\ldots,n\}$ with $\kappa_0=n$.
Further, let $\epsilon_t$ and $\zeta_t$ for $t=0,\ldots,T$ be two sequences
of reals in $[0,1]$ and suppose that for all $t< T$ the three
sequences satisfy inequalities (\ref{eqK-evol-conc}) and (\ref
{eqG-evol-conc}). Then $\kappa_t > n^{3/4}$ for all $t\leq T$.
\end{lemma}

Observe that the desired lower bound on the coalescence time of the
uniform process $\mathcal{U}$ is an immediate corollary of Proposition
\ref{propG-evol-conc} and Lemma~\ref{lemseq-evol}. Indeed, condition on
the first $t$ rounds where $0 \leq t < T = \frac 1{75}\log
n\cdot\frac{\log\log n}{\log\log\log n}$ and assume $\kappa _t
>n^{3/4}$. Proposition~\ref{propG-evol-conc} implies that equations
(\ref{eqK-evol-conc}), (\ref{eqG-evol-conc}) hold\vspace*{1pt} except
with probability $O(\kappa_t^{-100})=o(n^{-1})$. In this event Lemma
\ref{lemseq-evol} yields $\kappa_{t+1}>n^{3/4}$, extending our
assumption to the next round. Accumulating these probabilities for all
$t<T$ now shows that $\P ( \kappa_T > n^{3/4}) = 1 - o(T/n)$ and in
particular $\tauc> T$ w.h.p., as required.
\begin{pf*}{Proof of Lemma~\ref{lemseq-evol}}
The proof proceeds by induction. Assuming that $\kappa_i > n^{3/4}$ for
all $i \leq t < T$, we wish to deduce that $\kappa_{t+1}>n^{3/4}$.

Repeatedly applying equation (\ref{eqG-evol-conc}) and using the
induction hypothesis we find that
%
%
\begin{eqnarray}
\label{eq-zeta-lower-bound}
\zeta_{t+1}
&\geq&
\zeta_0 + \sum_{i=0}^t (
\epsilon_i^{13/\epsilon_i} - 8 \kappa_i^{-1/3}
)
>
\sum_{i=0}^t (\epsilon_i^{13/\epsilon_i})
- 8(t+1) ( n^{3/4} )^{-1/3} \nonumber\\[-8pt]\\[-8pt]
&=& \sum_{i=0}^t(\epsilon
_i^{13/\epsilon_i}) - n^{-1/4+o(1)}\nonumber
\end{eqnarray}
since $t \leq T = n^{o(1)}$. Following this, we claim that the set $I =
\{ 0 \leq i \leq t\dvtx\epsilon_i \geq15\frac{\log\log\log
n}{\log
\log n}\}$ has size at most $(\log n)^{{9}/{10}}$. Indeed, as
$x^{1/x}$ is monotone increasing for all $x \leq e$, every such $i\in
I$ has
\begin{eqnarray*}
\epsilon_i^{13/\epsilon_i}
&\geq&
\biggl(
15\frac{\log\log\log n}{\log\log n}
\biggr)^{
{13\log\log n}/(15\log\log\log n)
}
\\
&=& (\log n)^{-{13}/{15}+o(1)} > (\log n)^{-{9}/{10}},
\end{eqnarray*}
where the last inequality holds for large $n$. Hence, if we had $|I| >
2(\log n)^{9/10}$
then it would follow from (\ref{eq-zeta-lower-bound}) that $\zeta_{t+1}
> 2-o(1)$, contradicting the assumption of the lemma for large enough $n$.

Moreover,\vspace*{1pt} by the assumption that $\epsilon_i \in[0,1]$, we
have $\frac 12 \leq(1-\epsilon_i/2)\leq1$ for all~$i$.
Together\vspace*{1pt} with the facts that $\kappa_{i+1}
\geq(1-\epsilon_i/2)\kappa_i -\kappa_i^{2/3}$ for all $i \leq t$ due to
(\ref{eqK-evol-conc}) while $\kappa_i \leq n$ for all $i$ we now get
\begin{eqnarray*}
\kappa_{t+1} &\geq&
\kappa_0 \prod_{i=0}^t ( 1 - \epsilon_i/2 )
- \sum_{i=0}^t \kappa_i^{2/3} \\
&\geq&
\biggl( 1 - \frac12 \cdot\frac{ 15\log\log\log n} {\log\log n}
\biggr)^t 2^{-|I|} n - (t+1) n^{2/3} \\
&\geq& e^{- 15({\log\log\log n})T/{\log\log n}} 2^{-|I|} n
- T n^{2/3},
\end{eqnarray*}
where the last inequality used the fact that $t < T$ as well as the
inequality $1 - x/2 > e^{-x}$, valid for all $0 < x < 1$.
Now, $2^{-|I|} = n^{-o(1)}$ since $|I| \leq2(\log n)^{9/10}$ and by
the definition of $T$ we obtain that
\[
\kappa_{t+1} \geq
e^{-(\log n)/5} n^{1 - o(1)} - n^{{2}/{3} + o(1)}
=
n^{{4}/{5} - o(1)}
>
n^{3/4}
\]
for sufficiently large $n$, as claimed. The proof is complete.
\end{pf*}

The remaining sections are devoted to the proof of Proposition \ref
{propG-evol-conc} and are organized as follows.
In Section~\ref{secGt12} we will relate $G_t(\frac12)$ to the
expected change in $\kappa_t$.
While unfortunately there is no direct recursive relation for the
sequence $\{ G_t(\frac{1}{2})\dvtx t\geq0\}$,
in Section~\ref{secrec-approx-F} we will approximate $\E[
F_{t+1}(\kappa_t s)\mid\cF_t]$
[closely related to $G_{t+1}(s) = (1/\kappa_{t+1}) F_{t+1}(\kappa_{t+1}
s)$] in terms of several evaluations of $G_t$.
We will then refine our approximation of $G_{t+1}(\frac12)$ in Section
\ref{secconvex-correc} by examining $F_{t+1}(s)$
at a point $s \approx\E[ \frac12\kappa_{t+1} \mid\cF_t]$.
Finally, these ingredients will be combined into the proof of
Proposition~\ref{propG-evol-conc} in Section~\ref{secG-evol-conc}.

\subsection{Relating $G_t(\frac12)$ to the coalescence rate}\label{secGt12}
The next lemma shows that the value of $G_t(\frac12)$ governs the
expected number of merges in round $t+1$.
%
%
\begin{lemma}
\label{lemEK}
Suppose that after $t$ rounds we have $\kappa_t \geq2$ clusters and
set $\epsilon_t = 1 - G_t(\frac12)$. Then
%
%
\begin{equation}
\label{eqEK}
\bigl|\E[ \kappa_{t+1} \mid\cF_t ] -
(1-\epsilon_t/2) \kappa_t \bigr| \leq\tfrac{1}{4}.
\end{equation}
\end{lemma}

This emphasizes the importance of tracking the value of
$G_t(\frac{1}{2})$, as one could derive a lower bound on the
coalescence time by showing that $G_t(\frac{1}{2})$ is sufficiently
close to 1 (i.e., $\epsilon_t$ is suitably small).
In order to prove this lemma we first require two straightforward facts
on the functions involved.
%
%
\begin{claim}
\label{clmG-lip}
The following holds for all $t$ with probability $1$. The function
$G_t(\cdot)$ is convex, decreasing and $1$-Lipschitz on the domain
$\mathbb{R}^+$. Furthermore, $G_t(s) \geq e^{-s}$ for any $s$.
\end{claim}
\begin{pf}
Denote the cluster-sizes at the end of round $t$ by $w_1,\ldots
,w_{\kappa_t}$.
Recall that by definition $G_t(s) = (1/\kappa_t)F_t(\kappa_t s) =
(1/\kappa_t) \sum_i e^{-w_i \kappa_t s}$ is an arithmetic mean of
negative exponentials of $s$, hence, convex and
decreasing. Moreover, its first derivative is $G'_t(s) = F'_t(\kappa_t s)$
and in particular
\[
G_t'(0) = F_t'(0) = -\sum_i w_i = -1.
\]
Since $G_t'(s)$ is increasing and negative we deduce that $G_t$ is
indeed 1-Lipschitz. Finally, since the negative exponential function is
convex, Jensen's
inequality concludes the proof by yielding
\[
G_t(s)
=
\frac{1}{\kappa_t}
\sum_i e^{-w_i \kappa_t s}
\geq
e^{ - (1/\kappa_t) \sum_i w_i \kappa_t s }
= e^{-s}.
\]
\upqed
\end{pf}
%
%
\begin{claim}
\label{clmexp-approx}
For any real numbers $0 \leq x \leq1$ and $\kappa> 0$ we have
$ (1-x)^\kappa\geq e^{-\kappa x} - (e \kappa)^{-1}$.
\end{claim}
\begin{pf} Fix $\kappa> 0$ and consider the function $f(x) =
\kappa( e^{-\kappa x} - (1-x)^\kappa)$.
The desired inequality is equivalent to having $f(x) \leq1/e$ for all
$0 \leq x \leq1$,
hence, it suffices to bound $f(x)$ at all local maxima, then compare that
bound to its values at the endpoints $f(0) = 0$ and $f(1) = \kappa
e^{-\kappa}$.

It is easy to verify that any local extrema $x^*$ must satisfy
$ (1-x^*)^{\kappa- 1} = e^{-\kappa x^*} $, and so
%
\[
f(x^*) = \kappa e^{-\kappa x^*} \bigl(1 - (1-x^*)\bigr) = (\kappa x^*) e^{-\kappa
x^*}.
\]
Since $y e^{-y} \leq1/e$ for any $y\in\R$, both $f(x^*)$ and $f(1)$
are at most $1/e$, as required.
\end{pf}
\begin{pf*}{Proof of Lemma~\ref{lemEK}}
Let $\kappa= \kappa_t$ and $\kappa' = \kappa_{t+1}$, and as usual
let $w_1, \ldots,
w_\kappa$ denote the cluster-sizes at the end of $t$ rounds.
Recalling the definition of the uniform coalescence process,
the number of pairs of clusters that merge in round $t+1$ is equal to
the number of clusters which:
\begin{longlist}[(a)]
\item[(a)] select to be acceptors in this round, and
\item[(b)] receive at least one incoming request in this round.\vadjust{\goodbreak}
\end{longlist}
(Compare this simple characterization with the number of merges in the
\textit{size-biased} process,
where one must also consider the cluster-sizes of the incoming requests relative
to the size of the acceptor.)
A given cluster $\cC_i$ becomes an acceptor with probability $\frac12$,
and\vspace*{1pt} conditioning on this event
we are left with $\kappa-1$ other clusters, each of which may send a
request to the cluster $\cC_i$ with
probability $w_i/2$ (the factor of $2$ accounts for the choice to issue
rather than accept requests this round)
independently of its peers.
Altogether we conclude that the probability that $\cC_i$ accepts an
incoming request is
exactly $\frac{1}{2} ( 1 - (1 - w_i/2)^{\kappa-1} )$ and so the
expected total number of merges is
\[
\E[ \kappa- \kappa' \mid\cF_t ]
=
\frac12 \sum_i \bigl(
1 - (
1 - w_i/2
)^{\kappa-1}
\bigr).
\]
Therefore,
\begin{eqnarray*}
\E[ \kappa- \kappa' \mid\cF_t ]
&\geq&
\frac{1}{2} \sum_i \bigl(
1 - e^{- w_i (\kappa- 1)/2 }
\bigr) 
=
\frac{1 - G_t (({\kappa- 1})/({2\kappa})
) }
{2} \kappa\\
&\geq&
\frac{1 - G_t ( {1}/{2} ) }{2} \kappa
-
\frac{1}{4},
\end{eqnarray*}
where the last inequality is due to $G_t$ being 1-Lipschitz as was
established in Claim~\ref{clmG-lip}.
For an upper bound on the expected number of merges we apply Claim~\ref
{clmexp-approx}, from which it follows that
\begin{eqnarray*}
\E[ \kappa- \kappa' \mid\cF_t ]
&\leq&
\frac{1}{2} \sum_i
\biggl(
1 - e^{-w_i (\kappa-1)/2 } + \frac{1}{e \kappa}
\biggr) 
\leq
\frac{1}{2} \sum_i
(
1 - e^{- w_i \kappa/2}
)
+ \frac{1}{2 e} \\
&=&
\frac{1 - G_t ( {1}/{2} ) }{2} \kappa
+
\frac{1}{2 e}.
\end{eqnarray*}
Combining these bounds gives the required result.
\end{pf*}

\subsection{Recursive approximation for $F_t$}\label{secrec-approx-F}
Despite the fact that there is no direct recursion for the values of
$G_t(\frac12)$,
it turns out that on the level of expectation one can
recover values of its counterpart $F_{t+1}$ from several different
evaluations of
$G_t$. Note that this still does not provide an estimate for the
expected value of $G_{t+1}$, as
the transformation between the $F_{t+1}$ and $G_{t+1}$ unfortunately
involves the number of
clusters at time $t+1$, thereby introducing nonlinearity to the approximation.
%
%
\begin{lemma}
\label{lemFG-evol}
Suppose that after $t$ rounds $\kappa_t \geq2$ and let $\epsilon_t =
1-G_t(\frac12)$. Then
%
%
\begin{equation}
\label{eqFG-evol}
\E[F_{t+1}(\kappa_t s) \mid\cF_t]
>
(1-\epsilon_t/2)
\kappa_t
\biggl[
\frac{\alpha}{\alpha+\beta} G_t(s)
+
\frac{\beta}{\alpha+ \beta} G_t \biggl( s+ \frac{1}{2} \biggr)
\biggr]
- 2,\hspace*{-35pt}
\end{equation}
where
\[
\alpha= \alpha(s,t) = G_t(s) + G_t \bigl( \tfrac{1}{2} \bigr),\qquad
\beta=\beta(s,t)= 1 - G_t(s).\vadjust{\goodbreak}
\]
\end{lemma}
\begin{remark*}
Although the approximation in (\ref{eqFG-evol}) may look
intractable, its structure is in fact quite useful. The leading factor
$(1-\epsilon_t/2) \kappa_t$ is essentially
$\E[\kappa_{t+1} \mid\cF_t]$ from Lemma~\ref{lemEK}, which is
particularly convenient as we will need to divide by $\kappa_{t+1}$ to
pass from
$F_{t+1}$ to $G_{t+1}$.
\end{remark*}
\begin{pf*}{Proof of Lemma~\ref{lemFG-evol}}
As stated before, let $\kappa= \kappa_t$ and denote the cluster-sizes
by $w_1, \ldots, w_\kappa$. We
account for the change $F_{t+1}(s) - F_t(s)$ as follows. Should the
clusters $\cC_i$ and $\cC_j$
merge in round $t+1$, this would contribute exactly $e^{-(w_i + w_j)s}
- e^{-w_i s} -
e^{-w_j s}$ to $F_{t+1}(s)-F_t(s)$. Thus, $\E[F_{t+1}(s)-F_t(s)\mid
\cF_t]$
is simply the sum of these expressions, weighted by the probabilities
that the individual pairs
merge.

Let us calculate the probability that $\cC_i$ accepts an
incoming request from the cluster $\cC_j$. First let $R_i$ denote the
event that $\cC_i$ accepts an incoming request from \textit{some} cluster,
which was shown in the proof of Lemma~\ref{lemEK} to satisfy
$\P(R_i \mid\cF_t) = \frac{1}{2} ( 1 - (1 - w_i/2)^{\kappa-1}
)$.
Crucially, the fact that acceptors select an incoming request to merge with
via a uniform law now implies that, given $R_i$, the identity of the
cluster that $\cC_i$ merges with is uniform over the remaining $\kappa
-1$ clusters by symmetry. In particular, the probability that $\cC_i$
accepts a merge request from $\cC_j$ equals $\P(R_i \mid\cF
_t)/(\kappa
-1)$ and so
%
%
\begin{eqnarray}
\label{eqF-evol-1}\quad
&&
\E[F_{t+1}(s) - F_t(s) \mid\cF_t]\nonumber\\[-8pt]\\[-8pt]
&&\qquad=
\sum_{i \neq j}
\bigl( e^{-(w_i + w_j)s} - e^{-w_i s} - e^{-w_j s} \bigr)
\frac{1}{2} \bigl(
1 - (
1 - w_i/2
)^{\kappa-1}
\bigr)
\frac{1}{\kappa-1}.\nonumber
\end{eqnarray}
The term $( 1 - w_i/2 )^{\kappa-1}$ is greater or equal to
$e^{-(\kappa-1) w_i/2} - [e(\kappa-1)]^{-1} \geq e^{- w_i \kappa/2} -
[e(\kappa-1)]^{-1} $
by Claim~\ref{clmexp-approx}. Since $e^{-(w_i + w_j)s} - e^{-w_i s} -
e^{-w_j s}$ is always
negative by convexity, this gives
\begin{eqnarray*}
&&
\E[F_{t+1}(s) - F_t(s) \mid\cF_t]\\
&&\qquad\geq
\sum_{i \neq j}
\bigl( e^{-(w_i + w_j)s} - e^{-w_i s} - e^{-w_j s} \bigr)
\frac{1}{2} \biggl(
1 -
e^{-w_i\kappa/2} + \frac{1}{e(\kappa-1)}
\biggr)
\frac{1}{\kappa-1}.
\end{eqnarray*}
Next, observe that
%
%
\begin{equation}
\label{eqexp-lip}\quad
e^{-(w_i + w_j)s} - e^{-w_i s} - e^{-w_j s}
= -1 +
( 1 - e^{-w_i s} )
( 1 - e^{-w_j s} )
\geq-1,
\end{equation}
hence, we can sum the effect of the term $1/(e(\kappa-1))$ over all
$\kappa(\kappa-1)$ indices $i\neq j$ and get
\begin{eqnarray*}
&&\E[F_{t+1}(s) - F_t(s) \mid\cF_t]\\
&&\qquad\geq
\frac{1}{2(\kappa-1)} \sum_{i \neq j}\bigl[
\bigl( e^{-(w_i + w_j)s} - e^{-w_i s} - e^{-w_j s} \bigr)
(1 - e^{-w_i \kappa/2}) \bigr]\\
&&\qquad\quad{} - \frac{\kappa}{2e(\kappa-1)}.
\end{eqnarray*}
Note that the last expression has magnitude at most $1/e$ due to the
assumption $\kappa\geq2$.
Furthermore, each of the $\kappa(\kappa-1)$ summands in the summation
over $i\neq j$ has magnitude at most $1$,
hence, we may replace the factor $1/(\kappa-1)$
with $1/\kappa$ in front of the summation at a maximal cost of
$\frac12( \frac1{\kappa-1} - \frac1{\kappa}) \kappa(\kappa
-1)=\frac12$, giving
%
%
\begin{eqnarray}\label{eqFG-evol-2}
&&\E[F_{t+1}(s) - F_t(s) \mid\cF_t]\nonumber\\[-3pt]
&&\qquad\geq
\frac{1}{2\kappa} \sum_{i \neq j}\bigl[
\bigl( e^{-(w_i + w_j)s} - e^{-w_i s} - e^{-w_j s} \bigr)
(
1 - e^{-w_i\kappa/2}
) \bigr]
- \frac12 - \frac{1}{e} \\[-3pt]
&&\qquad> \frac{1}{2\kappa} \sum_{i,j}\bigl[
\bigl( e^{-(w_i + w_j)s} - e^{-w_i s} - e^{-w_j s} \bigr)
(
1 - e^{-w_i\kappa/2}
) \bigr]
- 2,\nonumber
\end{eqnarray}
where the last inequality is due to each of the $\kappa$ diagonal
terms $i=j$ having magnitude at most 1.

Since (\ref{eqFG-evol}) addresses $F_{t+1}(\kappa s)$ rather than $F_{t+1}(s)$
we now focus on the following summation:
\begin{eqnarray*}
&&\sum_{i,j} \bigl( e^{-(w_i + w_j) \kappa s} - e^{-w_i \kappa s} -
e^{-w_j \kappa s} \bigr) ( 1 - e^{-w_i\kappa/2} ) \\[-3pt]
&&\qquad= \sum_{i,j} \bigl( e^{-w_i \kappa s - w_j \kappa s} - e^{-w_i
(\kappa
s + {\kappa}/{2}) - w_j \kappa s}
- e^{-w_i \kappa s}\\[-3pt]
&&\qquad\quad\hspace*{26.5pt}{} +
e^{-w_i (\kappa s + {\kappa}/{2})}
- e^{-w_j \kappa s}
+
e^{- w_j \kappa s -w_i \kappa/2 } \bigr)\\[-3pt]
&&\qquad=
F_t(\kappa s)^2
- F_t\biggl(\kappa s + \frac{\kappa}{2} \biggr) F_t(\kappa s) - \kappa
F_t(\kappa s)
+ \kappa F_t\biggl(\kappa s + \frac{\kappa}{2} \biggr)\\[-3pt]
&&\qquad\quad{} - \kappa
F_t(\kappa s)
+ F_t\biggl(\frac{\kappa}{2} \biggr) F_t(\kappa s).
\end{eqnarray*}
Using this in (\ref{eqFG-evol-2}), noting that the term $- F_t(\kappa
s)$ cancels out, we find that
\begin{eqnarray*}
&&
\E[ F_{t+1}(\kappa s) \mid\cF_t ]\\[-3pt]
&&\qquad>
\frac{1}{2\kappa} \biggl[
F_t(\kappa s)^2
- F_t\biggl(\kappa s + \frac{\kappa}{2} \biggr) F_t(\kappa s)\\[-3pt]
&&\qquad\quad\hspace*{18.5pt}{}
+ \kappa F_t\biggl(\kappa s + \frac{\kappa}{2} \biggr)
+ F_t\biggl(\frac{\kappa}{2} \biggr) F_t(\kappa s)
\biggr] - 2 \\[-3pt]
&&\qquad=
\frac{ \kappa+ F_t( {\kappa}/{2} ) }{2}
\biggl[
\frac{ F_t(\kappa s) + F_t ( {\kappa}/{2} ) }
{ \kappa+ F_t ( {\kappa}/{2} ) }
\cdot\frac{F_t(\kappa s)}{\kappa}\\[-3pt]
&&\qquad\quad\hspace*{63.5pt}{} +
\frac{ \kappa- F_t(\kappa s) }
{ \kappa+ F_t ( {\kappa}/{2} ) }
\cdot\frac{F_t ( \kappa s + {\kappa}/{2} )
}{\kappa}
\biggr] - 2\\[-3pt]
&&\qquad=
\frac{ 1 + G_t( {1}/{2} ) }{2} \kappa
\biggl[
\frac{\alpha}{\alpha+\beta} G_t(s)
+
\frac{\beta}{\alpha+ \beta} G_t \biggl( s+ \frac{1}{2} \biggr)
\biggr]
- 2,
\end{eqnarray*}
where $\alpha= G_t(s) + G_t ( \frac{1}{2} )$ and
$\beta= 1 - G_t(s)$, thus establishing (\ref{eqFG-evol}).\vadjust{\goodbreak}
\end{pf*}

\subsection{Quantifying the convexity correction in the recursion for $F_t$}\label{secconvex-correc}
Examine the recursion established in Lemma~\ref{lemFG-evol}.
In order to derive lower bounds on the~$F_t$'s,
we recognize the second factor in the right-hand side of (\ref
{eqFG-evol}) as a weighted arithmetic mean of
two evaluations of $G_t$. Recalling that $G_t$ is a convex combination of
negative exponentials, we will now estimate the ``convexity correction''
between $G_t$ and its weighted mean. It is precisely this increment
which will allow us to show that $G_t$ rises toward 1
at a nontrivial rate, as the following lemma demonstrates.
%
%
\begin{lemma}
\label{lemE-increment}
Suppose after $t$ rounds $\kappa_t \geq2$ and let $\epsilon_t = 1 -
G_t(\frac{1}{2})$ and $\kappa^* = (1-\epsilon_t/2) \kappa_t$.
Then
%
%
\begin{equation}
\label{eqE-increment}
\E[ F_{t+1} (\kappa^*) \mid\cF_t ]
\geq
[
G_t(1) + \epsilon_t^{13/\epsilon_t}
] \kappa^*
-2.
\end{equation}
\end{lemma}

Indeed, by Lemma~\ref{lemEK} we recognize that $\kappa^*$ is
approximately $\E[\kappa_{t+1}\mid\cF_t]$,
hence, postponing for the moment concentration arguments, one sees that
equation (\ref{eqE-increment})
resembles the form of (\ref{eqG-evol-conc}). 
Our first step in proving this lemma will be to
establish a lower bound similar to (\ref{eqE-increment}) which
replaces the $\epsilon_t^{1/\epsilon_t}$ term
by the convexity correction between $G_t$ and its weighted mean from
(\ref{eqFG-evol}).
%
%
\begin{claim}
\label{clmG1+cvx}
Suppose after $t$ rounds $\kappa_t \geq2$ and let $\epsilon_t = 1 -
G_t(\frac{1}{2})$ and $\kappa^* = (1-\epsilon_t/2) \kappa_t$.
Let $h(s)$ be the secant line intersecting $G_t(s)$ at $s_1 =
\kappa^*/\kappa_t$ and $s_2 = s_1 + \frac{1}{2}$. Let
$ \theta= \frac{\alpha}{\alpha+\beta} s_1 + \frac{\beta}{\alpha+
\beta} s_2 $
where $\alpha=G_t(s_1)+G_t(\frac12)$ and $\beta=1-G_t(s_1)$, and let
$\Delta= h(\theta) - G_t(\theta)$. Then
%
%
\begin{equation}
\label{eqG1+cvx-2}
\E[ F_{t+1}(\kappa^*) \mid\cF_t ]
\geq
[ G_t(1) + \Delta] \kappa^*
-2
\end{equation}
and in addition
%
%
\begin{equation}
\label{eqG1+cvx-3}
\frac{\epsilon_t}{4}
\leq
\theta- s_1
\leq
\frac{1}{4}.
\end{equation}
\end{claim}
\begin{pf}
Applying Lemma~\ref{lemFG-evol} with $s = s_1 $ and rewriting its
statement in terms of $h,\theta,\Delta$ give
\[
\E[F_{t+1}(\kappa^*) \mid\cF_t]
>
(1-\epsilon_t/2) \kappa_t
h(\theta)
- 2
=
h(\theta) \kappa^* - 2
=
[ G_t(\theta) + \Delta]\kappa^* - 2.
\]
Since we established in Claim~\ref{clmG-lip} that $G_t$ is decreasing,
(\ref{eqG1+cvx-2}) will follow from showing that $\theta\leq1$.
Note that $\theta$ is a weighted mean between $s_1 = 1-\epsilon_t/2$
and $s_2 = s_1 + \frac12$,
and so it is not immediate that $\theta\leq1$. To show that this is
the case, we argue as follows.

Recalling the definition of $\theta$, we wish to show that $\alpha s_1
+ \beta(s_1+\frac12) \leq\alpha+\beta$
where $\alpha=G_t(s_1)+G_t(\frac12)$ and $\beta=1-G_t(s_1)$. Observe
that $\alpha+ \beta= 1+G_t(\frac12) = 2-\epsilon_t = 2s_1$ by
definition.\vadjust{\goodbreak} Therefore, $\theta\leq1$ if and only if $(\alpha+\beta
)s_1 + \beta/2 \leq2s_1$, or equivalently
\[
2\bigl(G_t\bigl(\tfrac12\bigr)-1\bigr)s_1 + 1 \leq G_t(s_1).
\]
We claim that indeed
%
%
\begin{equation}
\label{eq-theta-ineq}
2\bigl(G_t\bigl(\tfrac12\bigr)-1\bigr)s + 1 \leq G_t(s) \qquad\mbox{for any $s
\geq\frac12$},
\end{equation}
which would, in particular, imply that it holds for $s=s_1 > \frac12$
since $s_1 = 1-\epsilon_t/2$ with $\epsilon_t < 1$. In order to
verify (\ref{eq-theta-ineq}) observe that its left-hand side is an
affine function of $s$ whereas the right-hand side is convex and that
equality holds for $s=0$ [recall that $G_t(0)=1$] and $s=\frac12$.
Thus, the affine left-hand side does not exceed the convex right-hand
side for any $s \geq\frac{1}{2}$, as required. We now conclude that
$\theta\leq1$, establishing (\ref{eqG1+cvx-2}).

It remains to prove (\ref{eqG1+cvx-3}). Since $\theta$ is a
weighted arithmetic mean of $s_1$ and $s_2=s_1 + \frac{1}{2}$, the
upper bound will follow
once we show that the weight on $s_1$ exceeds the
weight on $s_2+\frac12$, that is, when $\alpha> \beta$ or equivalently
\[
2 G_t (s_1) + G_t \bigl( \tfrac{1}{2} \bigr) > 1.
\]
This indeed holds, as Claim~\ref{clmG-lip} established that $G_t(s)
\geq e^{-s}$ and therefore the left-hand side above is at least
$2e^{-s_1}+ e^{-1/2} \geq2/e + e^{-1/2} >\frac54$, where we used the
fact that $s_1 = 1-\epsilon_t/2 \leq1$.

For the lower bound in (\ref{eqG1+cvx-3}), recall from Claim \ref
{clmG-lip} that $G_t$ is decreasing and $G_t(0)=1$,
which together with the aforementioned fact that $s_1 \geq\frac12$ gives
\[
\frac{\beta}{\alpha+\beta} = \frac{1-G_t(s_1)}{1+G_t(1/2)}
\geq
\frac{1-G_t(1/2)}2 = \frac{\epsilon_t}2.
\]
The proof is now concluded by noting that $\theta- s_1 = \frac12
\cdot
\frac{\beta}{\alpha+\beta}$ by definition.~%
\end{pf}

Next, we will provide a lower bound on the convexity correction in
terms of the difference between two
evaluations of $G_t$.
%
%
\begin{claim}
\label{clmcvx-correction}
Let\vspace*{1pt} $s \leq1$ and let
$h$ be the secant line intersecting $G_t$ at $s$ and $s +
\frac{1}{2}$. For any $0 \leq\delta\leq\frac{1}{4}$,
\[
h(s + \delta) - G_t(s + \delta)
\geq
\frac{\delta^2}{2}
\biggl[
G_t\biggl( \frac{1}{2}\biggr) - G_t(1)
\biggr]^2.
\]
\end{claim}
\begin{pf}
Let $g$ denote the secant line intersecting $G_t$ at $s$ and $s +
2\delta$. Since $\delta\leq\frac{1}{4}$ and $G_t$ is a decreasing
convex function,
\[
G_t(s + \delta) < g(s + \delta) \leq h(s + \delta).
\]
It thus suffices to show the following to deduce the statement of the claim:
%
%
\begin{equation}
\label{eqcvx-correction-2}
g(s + \delta) - G_t(s + \delta) \geq\frac{\delta^2}{2}
\biggl[
G_t\biggl( \frac{1}{2}\biggr) - G_t(1)
\biggr]^2,\vadjust{\goodbreak}
\end{equation}
which has a particularly convenient left-hand side due to the fact
that $g(s+\delta) = \frac12[G_t(s)+G_t(s+2\delta)]$ by definition.
Now let $\kappa= \kappa_t$ and let $w_1,\ldots,w_\kappa$ be the
cluster-sizes at the end of round $t$. We have
%
%
\begin{eqnarray}\label{eqg-G}
&&
\frac{1}{2}[G_t(s) + G_t(s + 2\delta)] - G_t(s + \delta)\nonumber\\
&&\qquad=
\frac{1}{2\kappa} \sum_i \bigl[
e^{-w_i \kappa s}
- 2e^{-w_i \kappa(s + \delta)}
+ e^{-w_i \kappa(s + 2\delta)}
\bigr] \\
&&\qquad=
\frac{1}{2\kappa} \sum_i e^{-w_i \kappa s} (
1
- e^{-w_i \kappa\delta}
)^2.\nonumber
\end{eqnarray}
By Cauchy--Schwarz, the right-hand side of (\ref{eqg-G}) satisfies
%
%
\begin{eqnarray}\label{eqcvx-correction-3}
\frac{1}{2\kappa} \sum_i e^{-w_i \kappa s} (
1
- e^{-w_i \kappa\delta}
)^2
&\geq&
\frac12
\biggl[
\frac{1}{\kappa} \sum_i e^{-w_i \kappa s/2} ( 1 - e^{-w_i
\kappa
\delta} )
\biggr]^2\nonumber\\[-8pt]\\[-8pt]
&=&
\frac{1}{2} \biggl[
G_t \biggl( \frac{s}{2} \biggr)
- G_t \biggl( \frac{s}{2} + \delta\biggr)
\biggr]^2.\nonumber
\end{eqnarray}
Set $K = \lceil1/2\delta\rceil$, noting that $K \leq1/\delta$ as
$\delta\leq\frac14$.
Since $G_t$ is a decreasing convex function we have
\[
G_t \biggl( \frac{s}{2} \biggr) - G_t \biggl( \frac{s}{2} + \delta
\biggr)
\geq G_t \biggl( \frac{s}{2} + (j-1)\delta\biggr) - G_t \biggl(
\frac
{s}{2} + j\delta\biggr) \qquad\mbox{for any $j \geq1$},
\]
and summing these equations for $j=1,\ldots,K$ yields
\begin{eqnarray*}
G_t \biggl( \frac{s}{2} \biggr) - G_t \biggl( \frac{s}{2} + \delta
\biggr)
&\geq&
\frac{1}{K} \biggl[
G_t \biggl( \frac{s}{2} \biggr)
- G_t \biggl( \frac{s}{2} + K \delta\biggr)
\biggr]\\
&\geq&
\frac{1}{K} \biggl[
G_t \biggl( \frac{s}{2} \biggr)
- G_t \biggl( \frac{s}{2} + \frac{1}{2} \biggr)
\biggr],
\end{eqnarray*}
which is at least $(1/K) [ G_t ( \frac{1}{2}) - G_t(1) ]$
once again since $s \leq1$ and $G_t$ is convex and decreasing.
Therefore, since $K \leq1/\delta$ we can conclude that
\[
G_t \biggl( \frac{s}{2} \biggr)
- G_t \biggl( \frac{s}{2} + \delta\biggr)
\geq
\delta\biggl[ G_t \biggl( \frac{1}{2}\biggr) - G_t (1) \biggr],
\]
which together with (\ref{eqg-G}), (\ref{eqcvx-correction-3}) now
establishes (\ref{eqcvx-correction-2})
and thus the proof is complete.
\end{pf}

The above claim quantified the convexity correction in terms of
$G_t(\frac{1}{2}) - G_t(1)$, and next we wish to estimate this quantity
in terms of the key parameter $\epsilon_t = 1-G_t(\frac12)$, which governs
the coalescence rate as was established by Lemma~\ref{lemEK}.
%
%
\begin{claim}
\label{clmG5-G1}
For any $t$ we have $G_t( \frac{1}{2} ) - G_t(1) \geq\epsilon
_t^{5/\epsilon_t}$,
where $\epsilon_t = 1 - G_t( \frac{1}{2} )$.
\end{claim}
\begin{pf}
We first claim that
%
%
\begin{equation}
\label{eqG-CS}
G_t(s) - G_t(2s)
\leq
\sqrt{G_t(2s) - G_t(4s)} \qquad\mbox{for any $s > 0$}.
\end{equation}
Indeed, let $\kappa=\kappa_t$, let $w_1,\ldots,w_\kappa$ be the
cluster-sizes after time $t$ and define
\begin{eqnarray*}
X &=& G_t(0) - G_t(s) = \frac{1}{\kappa} \sum_i ( 1 - e^{- w_i
\kappa s} ), \\
Y &=& G_t(s) - G_t(2s) = \frac{1}{\kappa} \sum_i e^{- \kappa s} (
1 - e^{- w_i \kappa s} ), \\
Z &=& G_t(2s) - G_t(3s) = \frac{1}{\kappa} \sum_i e^{-2\kappa s}
(
1 - e^{- w_i \kappa s} ).
\end{eqnarray*}
By Cauchy--Schwarz, $Y \leq\sqrt{XZ}$. Moreover, $X Z\leq Z \leq
G_t(2s) - G_t(4s)$
since $G_t$ is decreasing and $G_t(0) = 1$, and combining these
inequalities now establishes~(\ref{eqG-CS}).

Let $\gamma= G_t( \frac{1}{2} ) - G_t(1)$ and let $r \geq2$. A
repeated application of (\ref{eqG-CS}) reveals that
\[
G_t (2^{-k})
- G_t \bigl(2^{-(k-1)}\bigr)
\leq\gamma^{1/2^{k-1}}
\qquad\mbox{for $k = 1,2,\ldots,r$},
\]
%
and summing these equations we find that
\[
G_t ( 2^{-r} ) - G_t \biggl( \frac{1}{2} \biggr)
\leq\sum_{k=1}^r \gamma^{1/2^{k-1}} \leq r \gamma^{1/2^{r-1}}.
\]
On the other hand, since $G_t$ is
1-Lipschitz we also have $G_t ( 2^{-r}) \geq G_t(0) - 2^{-r} = 1 - 2^{-r}$.

At this point, recalling that $\epsilon_t = 1 - G_t( \frac{1}{2} )$ and
combining it with the above bounds gives
%
%
\begin{equation}
\label{eq-epsilon-gamma}
\epsilon_t - 2^{-r} \leq G_t(2^{-r}) - G_t\bigl(\tfrac
12\bigr) \leq r \gamma^{1/2^{r-1}}.
\end{equation}
The above inequality is valid for any integer $r \geq2$ and we now
choose $r = \lceil\log_2(4/3\epsilon_t)\rceil$, or
equivalently $r$ is the least integer such that $2^{-r} \leq\frac34
\epsilon_t$. One should notice that indeed $r\geq2$ since we have
$\epsilon_t < \frac12$, which in turn follows from the fact $G_t(s)
\geq e^{-s}$ (see Claim~\ref{clmG-lip}) yielding
%
%
\begin{equation}
\label{eqG5}
\epsilon_t \leq1 - e^{-1/2} < \tfrac25.
\end{equation}
Revisiting (\ref{eq-epsilon-gamma}) and using the fact that $2^{-r}
\leq\frac34\epsilon_t$, we find that $ \epsilon_t/4 \leq r
\gamma^{1/2^{r-1}}$ and after rearranging $\gamma\geq(\epsilon_t /
4r)^{2^{r-1}}$. Moreover,\vspace*{2pt} by definition $r
\leq\log_2(8/3\epsilon_t)$ and as one can easily verify that $
4\log_2(8/3x) < x^{-11/4}$ for all $0<x \leq \frac 25$ [which by
(\ref{eqG5}) covers the range of $\epsilon_t$], we have $r < \frac14
\epsilon_t^{-11/4}$. The choice of $r$ further implies that $2^{r-1} <
4/3\epsilon_t$ and combining these bounds gives
\[
\gamma> \biggl( \frac{\epsilon_t}{4 r} \biggr)^{4/3\epsilon_t} >
(\epsilon_t^{15/4})^{4/3\epsilon_t} = \epsilon_t^{5/\epsilon
_t}
\]
as claimed.
\end{pf}

We are now ready to establish equation (\ref{eqE-increment}),
the quantitative bound on the convexity correction in the weighted mean
of (\ref{eqFG-evol}).
\begin{pf*}{Proof of Lemma~\ref{lemE-increment}}
By Claim~\ref{clmG1+cvx}, in order to prove (\ref
{eqE-increment}) it suffices to show
that $\Delta\geq\epsilon_t^{13/\epsilon_t}$ with $\Delta$ as defined
in the statement of that claim. Using (\ref{eqG1+cvx-3}) of
Claim~\ref{clmG1+cvx} we
can write $\Delta= h(s_1 + \delta) - G_t(s_1 + \delta)$ where $h$ is
the secant line defined in that claim,
$s_1 = 1-\epsilon_t/2$ and $\delta$ satisfies $\epsilon_t\leq
4\delta
\leq1$.
Therefore, Claim~\ref{clmcvx-correction} implies that
$ \Delta\geq\frac12 (\epsilon_t/4)^2 [ G_t (\frac{1}{2}) - G_t(1)]^2$.
Applying Claim~\ref{clmG5-G1} we find that
\[
\Delta
\geq
\frac{1}{2}
\biggl(
\frac{\epsilon_t}{4}
\biggr)^2
(
\epsilon_t^{5/\epsilon_t}
)^2
\geq
\epsilon_t^{13/\epsilon_t},
\]
where we consolidated the constant factors into the
exponent using the fact that $x^2/32 > x^{3/x}$ for all $0< x \leq
\frac
25$ while bearing in mind that by (\ref{eqG5}) indeed $\epsilon_t <
\frac25$.
\end{pf*}

\subsection{\texorpdfstring{Proof of Proposition \protect\ref{propG-evol-conc}}{Proof of Proposition 5.1}}
\label{secG-evol-conc}
Let $\kappa= \kappa_t$ and note that w.l.o.g. we may assume that
$\kappa$ is sufficiently large
by choosing the constant $C$ from the statement of the proposition
appropriately.

Let $w_1, \ldots, w_\kappa$ denote the cluster-sizes.
As argued before, given $\cF_t$ one can realize round $t+1$ of the
process by a $\kappa$-dimensional
product space, where clusters behave independently as follows:
\begin{longlist}[(3)]
\item[(1)] For each $i$, the cluster $\cC_i$ decides whether to send or
accept requests via a fair coin toss.
\item[(2)] When sending a request $\cC_i$ selects its recipient cluster
randomly (proportionally to the $w_j$'s).
\item[(3)] When accepting requests $\cC_i$ generates a random real number
between 0 and~1 to be used to select the incoming merge-request it will
grant (uniformly over all the incoming requests).
\end{longlist}
As such, conditioned on $\cF_t$ the variable $\kappa_{t+1}$ is clearly
1-Lipschitz w.r.t. the above product space since changing the value
corresponding to the action of one cluster can affect at most one merge.
Thus, by a standard well-known coupling argument (see, e.g.,~\cite{AS})
the increments of the corresponding Doob martingale are bounded by 1
(i.e., $|M_{i+1}-M_i|\leq1$ where $M_i = \E[ \kappa_{t+1} \mid\cF
'_i]$ with $\cF'_i$ being the $\sigma$-algebra generated by the actions
of clusters $1,\ldots,i$ and $\cF_t$). Hoeffding's inequality now gives
\[
\P\bigl( \bigl|\kappa_{t+1} - \E[\kappa_{t+1} \mid\cF_t]\bigr| >
a
\mid \cF_t\bigr) \leq2\exp(-a^2 / 2\kappa) \qquad\mbox{for any
$a>0$}.
\]
Letting $\kappa^* = (1+\epsilon_t/2) \kappa$ we recall from
Lemma \ref
{lemEK} that $|\E[\kappa_{t+1} \mid\cF_t] - \kappa^*| \leq\frac
{1}{4}$ and obtain that
%
%
\begin{eqnarray}
\label{eq-kappa-10-a}
\P( | \kappa_{t+1} - \kappa^* | > \kappa^{2/3} \mid\cF_t)
&\leq&
2\exp\bigl(-\tfrac12\bigl(\kappa^{2/3}-\tfrac14\bigr)^2 / \kappa\bigr)\nonumber\\
&=&
2\exp\bigl(-\tfrac12\kappa^{1/3} + O(\kappa^{-1/3})\bigr) \\
&<& \kappa
^{-100},\nonumber
\end{eqnarray}
where the last inequality holds for any sufficiently large $\kappa$,
thus establishing~(\ref{eqK-evol-conc}).

To obtain (\ref{eqG-evol-conc}), recall from (\ref
{eqexp-lip}) that $-1 \leq e^{-(w_i + w_j)s} - e^{-w_i s} -
e^{-w_j s} \leq0$, implying that the random variable $F_{t+1} (\kappa
^*)$ is 1-Lipschitz w.r.t. the aforementioned $\kappa$-dimensional
product space. Furthermore, $\E[ F_{t+1} (\kappa^*) \mid\cF_t ]
\geq
[ G_t(1) + \epsilon_t^{13/\epsilon_t} ] \kappa^* -2$
due to Lemma~\ref{lemE-increment}, and by the same argument as before
we conclude from Hoeffding's inequality that
\begin{eqnarray*}
&&\P\bigl(
F_{t+1}(\kappa^*) < [
G_t(1) + \epsilon_t^{13/\epsilon_t}
]\kappa^*
- \kappa^{2/3} \mid\cF_t
\bigr)\\
&&\qquad\leq\exp\bigl(-\tfrac12\kappa^{1/3} + O(\kappa^{-1/3})\bigr) <
\kappa
^{-100}.
\end{eqnarray*}
Rewriting this inequality in terms of $G_{t+1}$, with probability at
least $1 - \kappa^{-100}$ we have
\begin{eqnarray*}
G_{t+1}\biggl( \frac{\kappa^*}{\kappa_{t+1}}\biggr)
&\geq&[
G_t(1) + \epsilon_t^{13/\epsilon_t}
] \frac{\kappa^*}{\kappa_{t+1}} - \frac{\kappa^{2/3}}{\kappa
_{t+1}}\\
&\geq&
G_t(1) + \epsilon_t^{13/\epsilon_t}
- \frac{2|\kappa_{t+1}-\kappa^*|+\kappa^{2/3}}{\kappa_{t+1}},
\end{eqnarray*}
where we used that $[G_t(1) + \epsilon_t^{13/\epsilon_t}](\kappa
^*-\kappa_{t+1}) \geq-(G_t(0) + 1)|\kappa_{t+1}-\kappa^*| =
-2|\kappa
_{t+1}-\kappa^*|$ due to $G_t(s)$ being decreasing in $s$. Moreover,
since $G_{t+1}$ is 1-Lipschitz as was shown in Claim~\ref{clmG-lip},
in this event we have
\[
G_{t+1}(1)
\geq
G_{t+1} \biggl(\frac{\kappa^*}{\kappa_{t+1}}\biggr) - \biggl|1-
\frac
{\kappa^*}{\kappa_{t+1}} \biggr|
\geq
G_t(1) + \epsilon_t^{13/\epsilon_t}
- \frac{3|\kappa_{t+1}-\kappa^*|+\kappa^{2/3}}{\kappa_{t+1}}.
\]
Finally, recalling from (\ref{eq-kappa-10-a}) that $|\kappa
_{t+1}-\kappa^*| \leq\kappa^{2/3}$ except with a probability of at
most $\kappa^{-100}$, we can conclude that with probability at least
$1-2\kappa^{-100}$
\begin{eqnarray*}
G_{t+1}(1)
&\geq&
G_{t+1} \biggl(\frac{\kappa^*}{\kappa_{t+1}}\biggr) - \biggl|1-
\frac
{\kappa^*}{\kappa_{t+1}} \biggr|
\geq
G_t(1) + \epsilon_t^{13/\epsilon_t}
- 4 \frac{\kappa^{2/3}}{\kappa_{t+1}} \\
&\geq&
G_t(1) + \epsilon_t^{13/\epsilon_t}
- 8 \kappa^{-1/3},
\end{eqnarray*}
where the last inequality used the fact that $\kappa_{t+1} \geq\kappa
/2$ by definition of the coalescence process (since the merging pairs
of clusters are always pairwise-disjoint).
This yields (\ref{eqG-evol-conc}) and therefore completes the proof of
the proposition.

\section*{Acknowledgments}

We thank Yuval Peres and Dahlia Malkhi for suggesting the problem and
for useful discussions. The starting point of our work is attributed to
the analytic approximation framework of Oded Schramm, and E. Lubetzky is
indebted to Oded for enlightening and fruitful discussions on his
approach.

This work was initiated while P.-S. Loh was an intern at the Theory
Group of Microsoft Research, and he thanks the Theory Group for its
hospitality.

%
%

\printaddresses

\end{document}